\theoremstyle{plain}
\newtheorem{theorem}{Theorem}[section]
\newtheorem{proposition}[theorem]{Proposition}
\newtheorem{lemma}[theorem]{Lemma}
\newtheorem{corollary}[theorem]{Corollary}
\DeclareFontFamily{U}{wncy}{}
    \DeclareFontShape{U}{wncy}{m}{n}{<->wncyr10}{}
    \DeclareSymbolFont{mcy}{U}{wncy}{m}{n}
    \DeclareMathSymbol{\Sh}{\mathord}{mcy}{"58}
\theoremstyle{definition}
\newcommand{\appsection}[1]{\let\oldthesection\thesection
\renewcommand{\thesection}{Appendix \oldthesection}
\section{#1}\let\thesection\oldthesection}
\theoremstyle{remark}
\newtheorem{remark}[theorem]{Remark}
\DeclareMathOperator{\spec}{Spec}
\DeclareMathOperator{\Sp}{sp}
\DeclareMathOperator{\Sel}{Sel}
\DeclareMathOperator{\Ch}{char}
\def\R{{\mathbb{R}}}
\def\Z{{\mathbb{Z}}}
\def\F{{\mathbb{F}}}
\def\Q{{\mathbb{Q}}}
\def\C{{\mathbb{C}}}
\def\P{{\mathbb{P}}}
\def\frakd{{\mathfrak{d}}}
\def\frakp{{\mathfrak{p}}}
\def\frakn{{\mathfrak{n}}}
\def\frakm{{\mathfrak{m}}}
\DeclareMathOperator{\rk}{rank}
\DeclareMathOperator{\ord}{ord}
\def\calT{{\mathcal{T}}}
\def\WW{{\mathcal{W}}}
\DeclareMathOperator{\Sym}{Sym}
\DeclareMathOperator{\Gal}{Gal}
\begin{document}
\title{Watkins' conjecture for elliptic curves over function fields}
\author[Jerson Caro]{Jerson Caro}
\email{jocaro@uc.cl}
\address{Facultad de Matem\'aticas, Pontificia Universidad Cat\'olica de Chile, Campus San Joaqu\'in, Avenida Vicu\~na Mackenna 4860, Santiago, Chile.}


\maketitle
\begin{abstract}
In 2002 Watkins conjectured that given an elliptic curve defined over $\Q$, its Mordell-Weil rank is at most the $2$-adic valuation of its modular degree. We consider the analogous problem over function fields of positive characteristic, and we prove it in several cases. More precisely, every modular semi-stable elliptic curve over $\mathbb{F}_q(T)$ after extending constant scalars, and every quadratic twist of a modular elliptic curve over $\mathbb{F}_q(T)$ by a polynomial with sufficiently many prime factors satisfy the analogue of Watkins' conjecture. Furthermore, for a well-known family of elliptic curves with unbounded rank due to Ulmer, we prove the analogue of Watkins' conjecture. 
\end{abstract}
\section{Introduction} \label{intro}
Let $\mathcal{E}$ be an elliptic curve over $\Q$ of conductor $N$. The modular degree $m_{\mathcal{E}}$ of $\mathcal{E}$ is the minimum degree of all modular parametrizations $\phi:X_0(N)\to \mathcal{E}$ over $\Q$. The modularity Theorem \cite{Wiles1995,taylor1995,breuil2001} implies that it is well-defined. In 2002 Watkins \cite{Watkins2002} conjectured that for every elliptic curve $\mathcal{E}$ over $\Q$ we have $r \leq \nu_2(m_{\mathcal{E}})$, where $\nu_2$ denotes the $2$-adic valuation and $r\coloneqq\rk_\Z (\mathcal{E}(\Q))$.

Let $k$ be a finite field of characteristic $p>3$, write $A=k[T]$ for the polynomial ring, and let $K=k(T)$ be its fraction field. Let $\infty$ denote the place of $K$ associated with $1/T$. Let $E$ be a non-isotrivial (see Section 2.3 for the definition) elliptic curve defined over $K$. Under the assumption that $E$ has split multiplicative reduction at $\infty$, there is an analogue to the modularity Theorem cf. Theorem \ref{modularity theorem}. Namely, if $E$ is non-isotrivial and has split multiplicative reduction at $\infty$ and conductor ideal $\mathfrak{n}$, then there is a non-constant map $\phi_E:X_0(\mathfrak{n})\to E$, where $X_0(\mathfrak{n})$ is the corresponding Drinfeld modular curve. Thus, from now on we say that $E$ is modular if it is non-isotrivial and has split multiplicative reduction at $\infty$. Given a modular elliptic curve $E$ over $K$, we say that it satisfies Watkins' conjecture if $\rk_\Z (E(K))\leq \nu_2(m_E)$, where $m_E$ is the minimal degree of a modular parametrization $\phi_E$.

Using Atkin-Lehner involutions  we prove a potential version of Watkins' conjecture for semi-stable elliptic curves over $K$ (see \cite{dummigan2013powers} and \cite{caro2021} for other applications of Atkin-Lehner involutions in the context of Watkins' conjecture).

\begin{theorem}\label{Wconjecture extension}
Let $E$ be a modular semi-stable elliptic curve defined over $K$ with conductor $\frakn_E=(n)\infty$. Let $k'$ be a finite field containing the splitting field of $n$ over $k$, then Watkins' conjecture holds for $E'=E\times_{\spec K}\spec K'$, where $K'\coloneqq k'(T)$. 
\end{theorem}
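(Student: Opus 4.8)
The plan is to control both sides of Watkins' inequality for $E'$ by a single combinatorial quantity: the number $s$ of finite primes of bad reduction of $E'$ over $K'$. Since $E$ is semi-stable, $n$ is square-free, and because $k'$ contains the splitting field of $n$ we may write $n = c\prod_{i=1}^{s}(T-a_i)$ over $k'$ with the $a_i$ distinct. Thus over $K'$ the curve $E'$ has multiplicative reduction exactly at the $s$ distinct degree-one primes $\mathfrak{p}_i=(T-a_i)$ together with split multiplicative reduction at $\infty$, so $\deg\mathfrak{n}_{E'}=s+1$, and the Atkin--Lehner group $W=\langle w_{\mathfrak{p}_1},\dots,w_{\mathfrak{p}_s}\rangle\cong(\Z/2)^s$ acts on $X_0(\mathfrak{n})$. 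The target is the chain $\rk_\Z(E'(K'))\le s-3\le \nu_2(m_{E'})$.

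For the \textbf{upper bound on the rank} I would use that $E'$ is non-isotrivial, so by Grothendieck--Ogg--Shafarevich the $L$-function $L(E'/K',s)$ is a polynomial in $q'^{-s}$ of degree $\deg\mathfrak{n}_{E'}-4=s-3$; its order of vanishing at the central point is therefore at most $s-3$. Combined with Tate's unconditional inequality $\rk_\Z(E'(K'))\le\ord_{s=1}L(E'/K',s)$ over function fields, this gives $\rk_\Z(E'(K'))\le s-3$. (When $s\le 3$ the $L$-function is constant, the rank is $0$, and the statement is trivial.)

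For the \textbf{lower bound on $\nu_2(m_{E'})$} I would take a parametrization $\phi\colon X_0(\mathfrak{n})\to E'$ of minimal degree $m_{E'}$ and exploit that each $w_{\mathfrak{p}_i}$ acts on the Drinfeld-modular Jacobian preserving the newform-isotypic piece, hence descends through $\phi$ to an automorphism $u_i$ of the curve $E'$ of the form $u_i(x)=\lambda_i x+t_i$, where $\lambda_i\in\{\pm1\}$ is the Atkin--Lehner eigenvalue (identified via $\phi^\ast\omega_{E'}$ being a multiple of the newform), $\mathrm{Aut}_{K'}(E')=\{\pm1\}$ since $E'$ is non-isotrivial, and $t_i\in E'(K')$. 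This defines a homomorphism $\rho\colon W\to\{\pm1\}\ltimes E'(K')$, $w\mapsto u_w$. The key observation is that $\rho(W)$, being a quotient of $(\Z/2)^s$, is an elementary abelian $2$-group, so every element is an involution; hence any pure translation $(1,t)\in\rho(W)$ satisfies $2t=0$, forcing the translation part into $E'[2]\cong(\Z/2)^2$ and the sign part into $\{\pm1\}$. Therefore
\[
\dim_{\F_2}\rho(W)\ \le\ 2+1\ =\ 3,
\]
so $W^0:=\ker\rho$ has $\dim_{\F_2}W^0\ge s-3$. Since $\phi\circ w=\phi$ for all $w\in W^0$, the map $\phi$ factors as $\bar\phi\circ\pi$ through $\pi\colon X_0(\mathfrak{n})\to X_0(\mathfrak{n})/W^0$, and as $W^0$ acts faithfully on the curve (it is a subgroup of the faithful $W$-action) we get $\deg\pi=|W^0|$. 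Hence $2^{\dim W^0}\mid m_{E'}$ and $\nu_2(m_{E'})\ge s-3$, which combines with the rank bound to prove the theorem.

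The main obstacle I anticipate is not the clean dimension count but the characteristic-$p$ adaptation of the descent step. Over a function field the argument "two maps to $E'$ with equal pullback of $\omega_{E'}$ differ by a translation" can fail because of inseparable morphisms, so I would establish the descent $\phi\circ w_{\mathfrak{p}_i}=u_i\circ\phi$ functorially through the Jacobian and the optimal quotient (where the newform-isotypic factor is Atkin--Lehner-stable) rather than through differentials, using the latter only to read off $\lambda_i$. The remaining care points are purely in setting up the Drinfeld-modular dictionary correctly: that $W\cong(\Z/2)^s$ acts faithfully on $X_0(\mathfrak{n})$ (so no cancellation occurs in $m_{E'}=|W^0|\cdot\deg\bar\phi$), and that the eigenvalue–reduction-type correspondence and the Grothendieck--Ogg--Shafarevich degree formula hold verbatim in this setting.
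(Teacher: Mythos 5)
Your proposal is correct and takes essentially the same route as the paper: Tate's geometric bound $\rk_\Z(E'(K'))\le \deg\frakn_{E'}-4=s-3$ on one side, and on the other the lower bound $\nu_2(m_{E'})\ge s-3$ obtained by showing that the subgroup of Atkin--Lehner involutions acting trivially through the modular parametrization has $\F_2$-dimension at least $s-3$, so that its order divides $m_{E'}$. Your homomorphism $\rho\colon W\to\{\pm1\}\ltimes E'(K')$ with $\dim_{\F_2}\rho(W)\le 1+2=3$ is precisely the content of the paper's Proposition \ref{power 2 modular} in different packaging (there the sign part is the index-$\le 2$ subgroup $\WW'$ fixing the newform, the translation part is the map $\theta(W)=\pi([W(x_0)-(x_0)])\in E'(K')[2]$, and your factorization $\phi=\bar\phi\circ\pi$ through $X_0(\frakn)/W^0$ is the paper's factorization through $X_0(\frakn)/\WW''$).
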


It is not known whether the Mordell-Weil rank of elliptic curves over $\Q$ is unbounded or not. Over $K$ we know that the rank is unbounded thanks to the work of Shafarevitch and Tate \cite{tate67} in the isotrivial case and Ulmer \cite{ulmer2002elliptic} in the non-isotrivial case. The next result proves Watkins' conjecture for one of the families given by Ulmer, thus, we obtain Watkins' conjecture for elliptic curves over $K$ with arbitrarily large rank.

\begin{theorem}\label{Wconjecture Ulmer}
Let $p$ be a prime and $n$ be a positive integer, such that $6\mid p^n+1$. The elliptic curve
\[
E:y^2+T^dxy=x^3-1
\]
where $d=(p^n+1)/6$ defined over $\F_q(T)$,
satisfies Watkins' conjecture.
\end{theorem}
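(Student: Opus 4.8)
The plan is to first pin down the arithmetic of $E$, then bound the two sides of Watkins' inequality separately and compare them orbit-by-orbit. First I would compute the standard invariants: a direct calculation gives $c_4 = T^{4d}$ and $\Delta_E = T^{6d} - 432$, legitimate since $\ch k = p > 3$. Because $6d = p^n + 1 \equiv 1 \pmod p$, the derivative of $\Delta_E$ is $T^{p^n}$, so $\Delta_E$ is separable with exactly $6d$ distinct nonzero roots in $\overline{k}$. At each such root $v(c_4) = 0$ while $v(\Delta_E) = 1$, so $E$ has multiplicative reduction of type $I_1$ there; passing to a minimal model at $\infty$ exhibits multiplicative reduction of type $I_{6d}$, which I would check is split so that $E$ is modular in the sense of the excerpt. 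Hence $E$ is semi-stable with conductor $\frakn_E = (T^{6d}-432)\,\infty$.

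Next I would obtain the Mordell-Weil rank $r = \rk_\Z E(K)$ from Ulmer's analysis. The associated elliptic surface is dominated by a Fermat surface, so Tate's conjecture holds and $r$ equals the order of vanishing of the $L$-function at the center. That $L$-function factors as a product of Jacobi-sum Hecke $L$-factors indexed by characters of $\mu_{6d}$, grouped into $\Frob_q$-orbits; in the supersingular range $p^n \equiv -1 \pmod{6d}$ the Frobenius eigenvalues are pinned to the central point, so $r$ becomes an explicit count of the $\Frob_q$-orbits meeting the relevant eigenvalue condition.

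For the other side I would bound $\nu_2(m_E)$ from below by the Atkin-Lehner mechanism already used for Theorem \ref{Wconjecture extension}. Each prime dividing $\frakn_E$ yields an Atkin-Lehner involution of $X_0(\frakn_E)$ compatible with $\phi_E$, and combining these divisibilities forces a factor of $2$ for all but one such prime, giving $\nu_2(m_E) \geq t - 1$, where $t$ is the number of prime divisors of $\frakn_E$. Over the given field $\F_q(T)$ these primes are precisely the $\Frob_q$-orbits of the $6d$ roots of $T^{6d}-432$ together with $\infty$, so $\nu_2(m_E)$ is at least the number of $\Frob_q$-orbits of those roots.

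The inequality $r \leq \nu_2(m_E)$ then reduces to a combinatorial comparison: the rank is a sum over the $\Frob_q$-orbits of characters satisfying the central-eigenvalue condition, whereas the lower bound for $\nu_2(m_E)$ counts \emph{all} $\Frob_q$-orbits of roots of $\Delta_E$. I would set up a $\Frob_q$-equivariant correspondence between the two index sets, both of cardinality $6d$, and argue that every orbit contributes at least as much to the modular-degree bound as to the rank. The main obstacle is exactly this matching: I must control the split/non-split type at each bad prime and at $\infty$, since this governs whether each Atkin-Lehner involution genuinely contributes a factor of $2$; verify through the Manin-type normalization that the divisibility passes from the newform to $m_E$ itself; and ensure the orbit-wise rank contribution never overshoots the corresponding orbit count. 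Reconciling the endpoint place $\infty$, whose fiber is $I_{6d}$ rather than $I_1$, with Ulmer's trivial-character term is the delicate final step of the argument.
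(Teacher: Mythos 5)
Your skeleton is the same as the paper's --- take the rank from Ulmer's explicit formula, bound $\nu_2(m_E)$ below by (number of bad primes)$\,-1$ via Atkin--Lehner involutions, and compare the two orbit counts --- but two steps are wrong or missing in ways that matter. First, the Atkin--Lehner mechanism (Proposition \ref{power 2 modular}) only produces involutions for primes dividing the \emph{finite} level $\frakn$; there is none attached to $\infty$, so your bound $\nu_2(m_E)\geq t-1$ with $t$ counting $\infty$ amounts to $\nu_2(m_E)\geq\omega_K(\frakn)$, one more than the mechanism can deliver. More seriously, what divides $m_E$ is the order of the kernel of the homomorphism $\theta\colon\mathcal{W}'\to E(K)[2]$, so the true bound is $\nu_2(m_E)\geq\omega_K(\frakn)-\kappa$ with $\kappa=\dim_{\F_2}([\mathcal{W}(\frakn):\mathcal{W}'])+\dim_{\F_2}(E(K)[2])$: besides the factor of $2$ lost because $Wf_E=\pm f_E$, each copy of $\Z/2\Z$ in $E(K)[2]$ can cost another factor. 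You never verify that $E$ has trivial $2$-torsion; the paper's proof opens with precisely this check (the $2$-division cubic of $E$ has no root over $\overline{\F_p}(T)$), and without it the bound degrades to $\omega_K(\frakn)-3$, which is too weak to close the final inequality.

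Second, the ``delicate final step'' you defer is not a matching argument but a bookkeeping of boundary terms, and it is exactly where the inequality is tight. Writing $m=6d$, the degree side is $\omega_K(\frakn)-1=\sum_{e\mid m}\phi(e)/o_e(q)-1-\delta$, where $\delta\in\{0,1\}$ records whether $T^m-432$ has a root in $\F_q$: the roots of $T^m-432$ form a $\mu_m$-torsor, not a copy of $\mu_m$, so one Frobenius orbit can be lost relative to the character count. Ulmer's rank is $\sum_{e\mid m,\,e\nmid 6}\phi(e)/o_e(q)$ plus a correction of at most $3$ supported on characters of order dividing $6$. Thus orbit-by-orbit the two sides do \emph{not} compare: the degree side runs a deficit of $1+\delta$ while the rank side carries a surplus of up to $3$, and one must check that the four divisors of $6$ contribute at least $4$ (in fact $6$ when $3\mid q-1$) to the full sum, together with the correlation that $3\mid q-1$ forces $q$ to be a square (since $6\mid p^n+1$ gives $p\equiv-1\pmod 3$), hence $\delta=0$. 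This case analysis is the actual content of the paper's proof of Theorem \ref{Wconjecture Ulmer}; until you control these corrections, the argument is incomplete.
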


On the other hand, Esparza-Lozano and Pasten \cite{Esparza2021} prove that, over $\Q$, the quadratic twist $\mathcal{E}^{(D)}$ of $\mathcal{E}$ by $D$ satisfies Watkins' conjecture whenever the number of distinct prime divisor of $D$ is big enough. Using results of Papikian \cite{papikian2007analogue} on $L(\Sym^2f,2)$ over function fields, when $f$ is a Drinfeld modular form, we can prove an analogue over function fields. In the following we write $\omega_K(g)$ for the number of distinct irreducible factors of a polynomial $g$ in $A$.

\begin{theorem}\label{Wconjecture twist} 
Let $E$ be an elliptic curve over $K$ with minimal conductor among its quadratic twists. Let its conductor be $\frakn\infty=(n_1^2n_2)\infty$, where $n_1, n_2$ are square-free coprime polynomials. Assume that $E$ has a non-trivial $K$-rational $2$-torsion. Let $g$ be a monic square-free polynomial of even degree such that $gcd(n_1,g)=1$, and $\omega_K(g)\geq 2\omega_K(\frakn)-\nu_2(m_E)$, then Watkins' conjecture holds for $E^{(g)}$.
\end{theorem}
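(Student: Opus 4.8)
The plan is to sandwich Watkins' conjecture for $E^{(g)}$ between an upper bound on the rank coming from a $2$-descent and a lower bound on $\nu_2(m_{E^{(g)}})$ coming from Papikian's symmetric-square formula, arranged so that the faster growth of the modular-degree bound in $\omega_K(g)$ overtakes the rank bound exactly under the hypothesis $\omega_K(g)\ge 2\omega_K(\frakn)-\nu_2(m_E)$. Before anything else I would record two structural facts. First, since $g$ has even degree, the reduction of $E^{(g)}$ at $\infty$ remains split multiplicative, so $E^{(g)}$ is again modular in the sense of the excerpt and $m_{E^{(g)}}$ is defined. Second, because the twisting character $\chi_g$ takes values in $\{\pm1\}$ and $-1$ acts trivially on $2$-torsion, there is a canonical isomorphism $E[2]\cong E^{(g)}[2]$ of $G_K$-modules; in particular $E^{(g)}$ inherits the rational $2$-torsion point of $E$, so a $2$-isogeny descent is available.

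For the rank bound I would descend along the $2$-isogeny $\varphi\colon E^{(g)}\to\widetilde{E}$ determined by the rational $2$-torsion point and its dual $\widehat{\varphi}$, via
\[
\rk_\Z(E^{(g)}(K))\le \dim_{\F_2}\Sel^{\varphi}(E^{(g)}/K)+\dim_{\F_2}\Sel^{\widehat{\varphi}}(\widetilde{E}/K)-2.
\]
Over $K=k(T)$ the class group is trivial and the units are $k^\times$, so each Selmer group embeds into a product of local conditions supported on the bad primes together with $\infty$, and its $\F_2$-dimension is bounded linearly in the number of such primes. The finite bad primes of $E^{(g)}$ are those dividing $\frakn$ together with those dividing $g$, where $\gcd(n_1,g)=1$ guarantees $E$ has good or multiplicative reduction and hence that the twist acquires additive reduction of type $I_0^{*}$. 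Counting the two Selmer contributions yields an upper bound of the shape $\rk_\Z(E^{(g)}(K))\le 2\omega_K(\frakn)+c\,\omega_K(g)$, the constant from the $-2$ in the descent formula absorbing lower-order terms.

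For the modular-degree bound I would exploit that $\Sym^2$ is insensitive to quadratic twisting: as Galois representations $\Sym^2 E^{(g)}\cong\Sym^2 E$, so $L(\Sym^2 f_g,s)$ and $L(\Sym^2 f,s)$ differ only in their Euler factors at the primes $p\mid g$. Feeding this into Papikian's formula expressing $m_E$ through $L(\Sym^2 f,2)$ and the automorphic/period data, the ratio $m_{E^{(g)}}/m_E$ becomes an explicit finite product of local factors indexed by $p\mid g$. At each such $p$ the local contribution is governed by the good-reduction Euler factor $(1-\alpha_p^2 q_p^{-2})(1-q_p^{-1})(1-\beta_p^2 q_p^{-2})$ of $\Sym^2 E$ together with the Tamagawa number $c_p\in\{1,2,4\}$ of the $I_0^{*}$ fibre of $E^{(g)}$. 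Computing the $2$-adic valuation of these contributions would give $\nu_2(m_{E^{(g)}})\ge \nu_2(m_E)+c'\,\omega_K(g)$.

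The main obstacle is precisely this last valuation computation: I must show that each prime $p\mid g$ contributes at least one more power of $2$ to $\nu_2(m_{E^{(g)}})$ than it costs in the rank bound, i.e. $c'-c\ge 1$, with no $2$-adic cancellation among the local Euler factors or against the changing periods. This is where Papikian's analysis of $L(\Sym^2 f,2)$, the control of the $I_0^{*}$ Tamagawa numbers, and the assumption that $E$ has minimal conductor among its twists (which pins down the reduction types at $p\mid g$ and forbids the twist from lowering the conductor) must be combined. Once the per-prime surplus is established, rearranging the hypothesis as $2\omega_K(\frakn)\le \nu_2(m_E)+\omega_K(g)$ closes the gap between the two bounds and yields $\rk_\Z(E^{(g)}(K))\le\nu_2(m_{E^{(g)}})$.
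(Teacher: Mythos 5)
Your strategy coincides with the paper's: bound $\rk_\Z(E^{(g)}(K))$ above by a $2$-isogeny descent, bound $\nu_2(m_{E^{(g)}})$ below via Papikian's formula together with the twist-invariance of $\Sym^2$, and let the hypothesis $\omega_K(g)\geq 2\omega_K(\frakn)-\nu_2(m_E)$ close the gap. However, your proposal stops exactly where the content of the theorem lies: you leave both constants unspecified and explicitly defer, as ``the main obstacle,'' the $2$-adic valuation computation that must produce a per-prime surplus $c'-c\geq 1$. That computation \emph{is} the proof, and it cannot be completed by the tools you list: you invoke Tamagawa numbers of the $I_0^{*}$ fibres, but Tamagawa numbers never enter Papikian's formula (Proposition 1.3 of the cited paper expresses $m_E$ through $q^{\deg\frakn-2}$, the Manin constant, $val_\infty(j_E)$ and $L(\Sym^2 f,2)$ only). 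Your worry about ``cancellation against the changing periods'' also dissolves once one uses Lemma \ref{2-adic degree}: the Manin constant is a power of $q$ (P\'al), hence odd, and $j_E=j_{E^{(g)}}$, so the $val_\infty(j)$ terms cancel exactly and $\nu_2(m_{E^{(g)}})-\nu_2(m_E)=\nu_2\bigl(L(\Sym^2 f^{(g)},2)/L(\Sym^2 f,2)\bigr)$.

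The missing idea is how to get the surplus of $2$-adic valuation from that $L$-ratio, and it uses the rational $2$-torsion hypothesis on the $L$-value side, not merely for the descent (the only place your outline uses it). The paper's Proposition \ref{L function twist} writes the ratio, up to odd factors, as $\prod_{\frakp\mid d}(|\frakp|^2-1)\prod_{\frakp\mid g/d}\bigl((|\frakp|+1)^2-a_\frakp(E)^2\bigr)(|\frakp|-1)$ with $d=\gcd(n_2,g)$. For $\frakp\mid d$ one has $8\mid |\frakp|^2-1$ since $|\frakp|$ is odd. For $\frakp\mid g/d$ (good reduction for $E$), the $K$-rational $2$-torsion injects into $E_\frakp(\F_\frakp)$, so $2\mid |\frakp|+1-a_\frakp(E)$, hence $4\mid (|\frakp|+1)^2-a_\frakp(E)^2$, and $2\mid |\frakp|-1$ supplies the third factor of $2$. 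This gives $c'=3$, i.e. $\nu_2(m_{E^{(g)}})\geq \nu_2(m_E)+3\omega_K(g)$, while Proposition \ref{rank bound} (additive primes, namely those dividing $n_1g$, count twice; multiplicative primes once) gives $c=2$, i.e. $\rk_\Z(E^{(g)}(K))\leq 2\omega_K(\frakn)+2\omega_K(g)$. Then the hypothesis yields $\nu_2(m_E)+3\omega_K(g)\geq 2\omega_K(\frakn)+2\omega_K(g)$ and the theorem follows. Without this explicit computation your argument is a correct plan, not a proof; and note also the minor slip that at primes $\frakp\mid d$ the twist has reduction of type $I_n^{*}$, not $I_0^{*}$.
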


The condition that $g$ has even degree is necessary to guarantee that $E^{(g)}$ is modular (cf. Section \ref{Section Twists}). The previous Theorem will be used to deduce the following: 

\begin{corollary}\label{Wconjecture twists ss}
Assume that $E$ is a semi-stable modular elliptic curve over $K$. Then we have that $E^{(g)}$ satisfies Watkins' conjecture whenever $\omega_K(g)\geq 3$. Furthermore, if every prime dividing $\frakn$ has non-split multiplicative reduction and $E(K)[2]\cong \Z/2\Z$ then $E^{(g)}$ satisfies Watkins' conjecture for every square-free polynomial $g\in A$ of even degree.
\end{corollary}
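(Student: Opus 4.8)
The plan is to deduce the Corollary directly from Theorem \ref{Wconjecture twist}, so that the real work reduces to verifying its hypotheses and then forcing the numerical condition $\omega_K(g)\geq 2\omega_K(\frakn)-\nu_2(m_E)$ out of the semi-stability of $E$. First I would record the structural simplifications. Since $E$ is semi-stable its conductor $\frakn$ is square-free, so in the notation of Theorem \ref{Wconjecture twist} we have $n_1=1$ and $n_2=\frakn$; in particular the coprimality requirement $\gcd(n_1,g)=1$ is vacuous, and any square-free $g$ of even degree is admissible. Semi-stability also forces $\frakn$ to be the smallest possible finite conductor in the twist family (a twist cannot turn multiplicative reduction into good reduction), so $E$ is minimal among its quadratic twists as required. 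The even-degree hypothesis on $g$ keeps the reduction at $\infty$ split multiplicative and hence keeps $E^{(g)}$ modular (cf. Section \ref{Section Twists}), and since a quadratic twist acts trivially on the $\F_2$-module $E[2]$, we have $E^{(g)}(K)[2]\cong E(K)[2]$, so the $2$-torsion hypothesis of Theorem \ref{Wconjecture twist} is a property of the whole twist family.

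The crux is therefore a lower bound for $\nu_2(m_E)$ in terms of $\omega_K(\frakn)$. By the Atkin--Lehner method already used for Theorem \ref{Wconjecture extension}, I would prove that for a semi-stable $E$ carrying a non-trivial $K$-rational $2$-torsion point one has
\[
\nu_2(m_E)\geq 2\,\omega_K(\frakn)-3.
\]
The mechanism is that each $\frakp\mid\frakn$ is a prime of multiplicative reduction, so the Atkin--Lehner involution $w_\frakp$ acts on the relevant space of Drinfeld cusp forms with eigenvalue $\mp1$ (the negative of $a_\frakp\in\{\pm1\}$); combined with the $2$-isogeny furnished by the rational $2$-torsion point, each such $\frakp$ contributes a factor of $2$ to the modular degree, and the global bookkeeping over the $\omega_K(\frakn)$ primes, after subtracting a bounded normalization coming from the place $\infty$ and the $2$-isogeny, yields the displayed inequality. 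Granting this, $2\,\omega_K(\frakn)-\nu_2(m_E)\leq 3$, so the hypothesis $\omega_K(g)\geq 3$ already gives $\omega_K(g)\geq 2\,\omega_K(\frakn)-\nu_2(m_E)$ and Theorem \ref{Wconjecture twist} applies, proving the first assertion.

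For the refinement I would sharpen the sign analysis under the extra hypotheses. When every prime dividing $\frakn$ has non-split multiplicative reduction, all the eigenvalues $a_\frakp$ equal $-1$, so every $w_\frakp$ acts with the same sign; together with $E(K)[2]\cong\Z/2\Z$ this removes the lossy normalization and upgrades the bound to $\nu_2(m_E)\geq 2\,\omega_K(\frakn)$. Then $2\,\omega_K(\frakn)-\nu_2(m_E)\leq 0\leq \omega_K(g)$ for every admissible $g$, so Theorem \ref{Wconjecture twist} applies to every square-free $g$ of even degree, giving the second assertion.

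The main obstacle is precisely the $2$-adic lower bound for $m_E$ with the correct constant: one must show that the rational $2$-torsion amplifies each Atkin--Lehner sign into a genuine factor of $2$ in the Drinfeld modular degree (a factor of $2$ per bad prime, rather than the naive $2^{\omega_K(\frakn)-1}$ divisibility), and must track the split/non-split dichotomy finely enough to gain the last two powers of $2$ in the non-split case. A secondary point is curves with $E(K)[2]=0$ in the first assertion, where Theorem \ref{Wconjecture twist} does not apply verbatim; there one falls back on the observation that $\rk_\Z(E^{(g)}(K))\leq \nu_2(m_{E^{(g)}})$ is vacuous unless the twisted rank is positive, so it suffices to bound that rank, which is again governed by the same Atkin--Lehner sign data through the functional equation.
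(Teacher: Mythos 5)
Your reduction of the Corollary to Theorem \ref{Wconjecture twist} \emph{applied as a black box} is where the argument breaks. To invoke that theorem verbatim you must verify $\omega_K(g)\geq 2\omega_K(\frakn)-\nu_2(m_E)$, and since $\omega_K(\frakn)$ is unbounded, your hypothesis $\omega_K(g)\geq 3$ forces you to prove $\nu_2(m_E)\geq 2\omega_K(\frakn)-3$ (and $\nu_2(m_E)\geq 2\omega_K(\frakn)$ for the second assertion). This inequality is nowhere established --- you yourself flag it as ``the main obstacle'' --- and the Atkin--Lehner mechanism you appeal to cannot produce it: the group $\mathcal{W}(\frakn)$ has order $2^{\omega_K(\frakn)}$, and the quotient construction of Proposition \ref{power 2 modular} yields $\#\mathcal{W}''\mid m_E$ with $[\mathcal{W}(\frakn):\mathcal{W}'']\leq 2^{\kappa}$, hence at most \emph{one} factor of $2$ per prime divisor of $\frakn$, i.e. $\nu_2(m_E)\geq\omega_K(\frakn)-\kappa$, never $2\omega_K(\frakn)-3$. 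Moreover the stronger bound is false in the analogous classical setting (the optimal curve of conductor $15$ over $\Q$ is semi-stable with rational $2$-torsion and has modular degree $1$, while $2\omega(15)-3=1$), so no refinement of the sign bookkeeping will rescue it. Your closing remark about curves with $E(K)[2]=0$ is also not a fix: Atkin--Lehner/root-number data controls the parity of the rank, not its size, and in any case the construction of $E^{(g)}$ in Section \ref{Section Twists} presupposes the model \eqref{EqnM3}, which has a rational $2$-torsion point.

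The paper avoids this entirely by \emph{not} citing the numerical hypothesis of Theorem \ref{Wconjecture twist}. It reuses the intermediate inequality \eqref{equ modular}, $\nu_2(m_{E^{(g)}})\geq\nu_2(m_E)+3\omega_K(g)$, but replaces the worst-case rank bound $\rk_\Z(E^{(g)}(K))\leq 2(\omega_K(\frakn)+\omega_K(g))$ used at the end of that theorem's proof by the sharper semi-stable bound of Lemma \ref{rank twist}: since $\frakn$ is square-free ($n_1=1$, $n_2=\frakn$), one has $\rk_\Z(E^{(g)}(K))\leq\omega_K(\frakn)+2\omega_K(g)$, where primes of $\frakn$ count once rather than twice. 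Combining this with $\nu_2(m_E)\geq\omega_K(\frakn)-3$ from Proposition \ref{power 2 modular} gives $\nu_2(m_{E^{(g)}})\geq\rk_\Z(E^{(g)}(K))+\omega_K(g)-3$, which proves the first claim; in the non-split case Theorem 3 of Atkin--Lehner gives $\mathcal{W}'=\mathcal{W}(\frakn)$, so with $\dim_{\F_2}E(K)[2]=1$ one gets $\kappa\leq 1$ and $\nu_2(m_{E^{(g)}})\geq\rk_\Z(E^{(g)}(K))+\omega_K(g)-1$, which proves the second. In short, the factor of $2$ you tried (and failed) to extract on the modular-degree side is actually saved on the rank side, and that is the step your proposal is missing.
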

\section{Preliminaries}
The idea of this section is to define the associated invariants to Watkins' conjecture over function fields. Write $K_\infty$ for the completion of $K$ at $T^{-1}$, and let $\mathcal{O}_{\infty }$ be its ring of integers. Let $\C_{\infty }$ denote the completion of an algebraic closure of $K_\infty$.
\subsection{Drinfeld Modular Curves}
We denote by $\Omega$ the Drinfeld upper half plane $\C_\infty- K_\infty$. Notice that $GL(2, K_\infty)$ acts on $\Omega$ by fractional linear transformations, in particular, so does the Hecke congruence subgroup associated with an ideal $\frakn$ of $A$
\[
\Gamma_0(\mathfrak{n})=\left\{g=\begin{pmatrix}a & b\\
c & d\end{pmatrix}\in G\colon a,b,c,d\in \F_q[T],\,c\equiv 0\text{ (mod }\mathfrak{n}),\, \det (g)\in \mathcal{O}_{\infty}\right\}.
\]
The compactification of the quotient space $\Gamma_0(\frakn)\backslash\Omega$ by the finitely many cusps $\Gamma_0(\frakn)\backslash\P^1(K)$ is the Drinfeld modular curve. We denoted it by $X_0(\frakn)$. 
\subsection{Drinfeld Modular Forms and Hecke Operators}
In this section, we define an analogue of the cuspidal Hecke newforms over $\C$. Another way to understand $\Omega$ is the Bruhat-Tits tree $\calT$ of $PGL(2,K_\infty)$, whose oriented edges are in correspondence with the cosets of $GL(2, K_\infty)/K_{\infty}^{\times}\cdot \mathcal{J}$ (see Section 4.2 \cite{Gekeler1996}), where
\[
\mathcal{J}=\left\{\begin{pmatrix}a&b\\
c &d\end{pmatrix}\in GL(2,O_\infty)\colon c\equiv 0\text{ (mod }T^{-1})\right\}.
\] 
This correspondence gives an action of $GL(2,K_\infty)$ on the real-valued functions on the oriented edges of $\calT$ by left-multiplying the argument. Let $\underline{H}_{!}(\Gamma_0(\frakn),\R)$ be the finite-dimensional $\R$-space of real-valued, alternating, harmonic and $\Gamma_0(\frakn)$-invariant functions on the oriented edges of $\calT$ having finite support modulo $\Gamma_0(\frakn)$.

For each divisor $\frakd=(d)$ of $\frakn$, let $i_\frakd$ be the map
\[
i_\frakd\colon (\underline{H}_{!}(\Gamma_0(\frakn/\frakd),\R))^2\longrightarrow \underline{H}_{!}(\Gamma_0(\frakn),\R)
\]
given by 
\[
i_{\frakd}(f,g)(e)=f(e)+g\left(\begin{pmatrix}d&0\\0&1\end{pmatrix}\cdot e\right),
\]
for every oriented edge $e$. The subspace of oldforms at level $\frakn$  is
\[
\underline{H}^{old}_{!}(\Gamma_0(\frakn),\R)=\sum_{\frakp\mid\frakn}i_{\frakp}((\underline{H}_{!}(\Gamma_0(\frakn/\frakp),\R))^2).
\]
Denote by $\underline{H}^{new}_{!}(\Gamma_0(\frakn),\R)$ to the orthogonal complement of the oldforms with respect to the Petersson-norm (see Section 4.8 Gekeler \textit{op. cit.}) defined over $\underline{H}_{!}(\Gamma_0(\frakn),\R)$.

For any nonzero ideal $\frakm$ there is a Hecke operator $T_\frakm$, for example, for $\frakm$ relatively prime to $\frakn$ is defined by
\[
T_{\frakm}f(e)=\sum f\left(\begin{pmatrix}a&b\\0&d\end{pmatrix}\cdot e\right),
\]
where the sum runs over $a, b, d\in A$ such that $a,d$ are monic, $\frakm=(ad)$, and $\deg(b)<\deg(d)$, see Section 4.9 Gekeler \textit{op. cit.} for a general definition. Finally, a newform is a normalized Drinfeld modular form $f\in \underline{H}^{new}_{!}(\Gamma_0(\frakn),\R)$, and an eigenform for all Hecke operators.
\subsection{Elliptic curves}
Let $E$ be an elliptic curve defined over $K$. Assume that $E$ has an affine model
\begin{equation}\label{EqnM1}
Y^2 + a_1XY+a_3Y = X^3+a_2X^2+a_4X+a_6.
\end{equation}
where $a_i\in K$. For this cubic equation, define the usual Weierstrass invariants:
\begin{align*}
b_2&=a_1^2+4a_2,\quad b_4=a_1a_3+2a_4, \quad b_6=a_3^2+4a_6,\\
b_8&=a_1^2a_6-a_1a_3a_4+4a_2a_6+a_2a_3^2-a_4^2,\\
c_4&=b_2^2-24b_4,\quad c_6=-b_2^3+36b_2b_4-216b_6,\\
\Delta&=-b_2^2b_8-8b_4^3-27b_6^2+9b_2b_4b_6,\\
j_E&=c_4^3\Delta^{-1}.
\end{align*}

We say that $E$ is \textit{non-isotrivial} when $j_E\notin k$. Since we assume that $\Ch(k)>3$ the conductor of $E$ is cubefree. Denote it by $\frakn_E$ and by $\frakn$ its finite part, in particular, $\frakn_E=\frakn\cdot \infty^{i}$, where $i\in\{0,1,2\}$. When $E$ has split multiplicative reduction at $\infty$, due to Drinfeld's reciprocity law (Proposition 10.3 \cite{Drinfeld1974}) and the fact that $E$ is automorphic (Theorem 9.8 in \cite{Deligne1973}), there is an analogue of the modularity Theorem over $\Q$:
\begin{theorem}[Modularity Theorem]\label{modularity theorem}
Let $E$ be an elliptic curve over $K$ of conductor $\frakn_E=\frakn_0\cdot\infty$ having split multiplicative reduction at $\infty$. There is a non-constant morphism $X_0(\frakn)\to E$ defined over $K$.
\end{theorem}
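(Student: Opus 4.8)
The plan is to assemble the morphism from the two results quoted in the paragraph preceding the statement, together with the analytic uniformization of the Jacobian $J_0(\frakn)$ of $X_0(\frakn)$. Since $E$ is non-isotrivial of conductor $\frakn\cdot\infty$ with split multiplicative reduction at $\infty$, I would first apply the automorphy statement (Theorem 9.8 of \cite{Deligne1973}) to attach to $E$ a cuspidal automorphic representation $\pi$ of $GL_2$ over $K$ with $L(\pi,s)=L(E,s)$. The key local observation is that split multiplicative reduction at $\infty$ forces the component $\pi_\infty$ to be special (Steinberg); this is precisely the condition encoded by harmonic cochains, so under the dictionary between automorphic forms for $GL_2$ over $K$ and functions on the oriented edges of the Bruhat--Tits tree $\calT$, the representation $\pi$ corresponds to a newform $f\in\underline{H}^{new}_{!}(\Gamma_0(\frakn),\R)$ which is a simultaneous eigenform for all the Hecke operators $T_\frakm$, with Hecke eigenvalues that are rational integers.

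Next I would invoke Drinfeld's reciprocity law (Proposition 10.3 of \cite{Drinfeld1974}) to realize $f$ geometrically: its Hecke eigensystem cuts out, inside $J_0(\frakn)$, an abelian subvariety $A_f$ defined over $K$. Integrality of the eigenvalues makes $A_f$ one-dimensional, hence an elliptic curve, and comparing its $L$-function --- computed from the Hecke eigenvalues of $f$ via the Eichler--Shimura congruence relating $T_\frakp$ to Frobenius at $\frakp$ --- with $L(E,s)$ shows that $A_f$ is isogenous to $E$ over $K$.

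Finally, the quotient map $J_0(\frakn)\twoheadrightarrow A_f$ precomposed with the Abel--Jacobi embedding $X_0(\frakn)\to J_0(\frakn)$ based at a $K$-rational cusp produces a non-constant morphism $X_0(\frakn)\to A_f$ defined over $K$ (non-constancy because $f\neq 0$); composing with the $K$-isogeny $A_f\to E$ then yields the required map $\phi_E$. Alternatively one can match $A_f$ with $E$ on the nose using that both acquire a Tate uniformization at $\infty$ from the split multiplicative reduction, the uniformization of $J_0(\frakn)$ (\cite{Gekeler1996}) being written in terms of the same cochain $f$. The step I expect to be the main obstacle is not the formal construction of $\phi_E$ but the verification that the factor $A_f$ is genuinely isogenous to $E$ over $K$, that is, the precise matching of the Frobenius traces of $E$ with the Hecke eigenvalues of $f$ at every place; this is where the full force of the automorphy and reciprocity theorems is used.
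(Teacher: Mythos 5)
Your proposal is correct and follows essentially the same route as the paper, which states this theorem without proof and simply cites the two inputs you assemble: automorphy of $E$ (Theorem 9.8 of \cite{Deligne1973}) and Drinfeld's reciprocity law (Proposition 10.3 of \cite{Drinfeld1974}), the split multiplicative reduction at $\infty$ serving exactly to make the component at $\infty$ special so that the form lives on the Bruhat--Tits tree. One caveat: your final step, deducing a $K$-isogeny $A_f\to E$ from the matching of Frobenius traces with Hecke eigenvalues, is not a consequence of ``the full force'' of automorphy and reciprocity alone but requires the isogeny theorem (the Tate conjecture for homomorphisms of abelian varieties over function fields, due to Zarhin), and this input should be named explicitly.
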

\begin{remark}
This Theorem gives a bijection between primitive newforms $f$ (i.e., $f$ is a newform such that $f\notin n\underline{H}^{new}_{!}(\Gamma_0(\frakn),\Z)$ for $n > 1$) with integer eigenvalues and isogeny classes of modular elliptic curves over $K$ with conductor $\frakn\cdot \infty$.
\end{remark}
\subsubsection{$L$-functions}
There is an attached $L$-function to an elliptic curve with conductor $\frakn_E$, which has an Euler product expansion
\[
L(E,s)=\sum_{n\text{ pos. div.}}\frac{a_n}{|n|^{s}}=\prod_{\frakp}\left(1-\frac{\alpha_\frakp}{|\frakp|^s}\right)^{-1}\left(1-\frac{\beta_\frakp}{|\frakp|^s}\right)^{-1},
\]
where $\alpha_{\frakp}, \beta_{\frakp}$ are defined as follows: (1) if $\frakp\nmid \frakn_E$, $\alpha_{\frakp}+ \beta_{\frakp}=a_\frakp\coloneqq|\frakp|+1-\#E(\F_\frakp)$ and $\alpha_{\frakp} \beta_{\frakp}=|\frakp|$, (2) if $\frakp \mid\mid \frakn_E$, $\alpha_{\frakp}=0$ and $\beta_{\frakp}=\pm1$, and (3) if $\frakp^2 \mid \frakn_E$, $\alpha_{\frakp}= \beta_{\frakp}= 0$. 

Due to results of Grothendieck \cite{Grothendieck1964} and Deligne \cite{Deligne1973} $L(E,s)=L(f_E,s)$, where $f_E$ is the newform associated to $E$, and $L(E,s)$ is a polynomial in the variable $q^{-s}$ of degree $\deg(\frakn)-4$.

Over this newform $f_E$ we define the $L$-function attached to its symmetric square $L(\Sym^2f_E,s)$ with the following local factors
\[
L_{\mathfrak{p}}(\Sym^2f_E,s)=\begin{cases}
1,& \text{if }\mathfrak{p}^2\mid \frakn_E,\\
\left(1-\frac{1}{|\mathfrak{p}|^s}\right)^{-1}, & \text{if }\mathfrak{p}\mid\mid \frakn_E,\\
\left(1-\frac{\alpha_{\mathfrak{p}}^2}{|\mathfrak{p}|^s}\right)^{-1}
\left(1-\frac{\alpha_{\mathfrak{p}}\overline{\alpha_{\mathfrak{p}}}}{|\mathfrak{p}|^s}\right)^{-1}\left(1-\frac{\overline{\alpha_{\mathfrak{p}}}^2}{|\mathfrak{p}|^s}\right)^{-1}& \text{if }\mathfrak{p}\nmid \frakn_E.
\end{cases} 
\]
When $E$ is semi-stable Proposition 5.4 from \cite{papikian2002degree} implies that $L(\Sym^2f_E,s)$ is a polynomial in the variable $q^{-s}$ of degree $2\deg(\frakn_E) -4$.

\subsubsection{Upper Bounds for the Rank of the Mordell-Weil Group}
The following is a geometric bound for the Mordell-Weil rank due to Tate \cite{tate65}
\begin{equation}\label{geometric bound}
\rk_\Z(E(K))\leq \ord_{s=1}L(E,s)\leq \deg(\frakn_E)-4.
\end{equation}
See \cite{Ulmer2011} for detailed proof. In addition, if the elliptic curve $E$ has a non-trivial $K$-rational $2$-torsion, we can give an upper bound for its Mordell-Weil rank in terms of $\omega_K(\frakn)$, the number of distinct primes that divide $\frakn$ in $A$.

First of all, notice that the change of variables  $X=z/4$, $Y= y/8 - a_1z/8 -a_3/2$ transforms \eqref{EqnM1} into
\begin{equation}\label{EqnM2}
y^2 = z^3 + b_2z^2+8b_4z + 16b_6.
\end{equation}
Let $\gamma\in K$ be a root of the previous cubic, associated to a non-trivial $K$-rational $2$-torsion point.  Then $\gamma\in A$  and the change of variables $z=x+\gamma$ turns \eqref{EqnM2} into
\begin{equation}\label{EqnM3}
y^2 = x^3+Ax^2+Bx
\end{equation}
where 
$$
A= 3\gamma + b_2 \quad \mbox{and}\quad B=3\gamma^2 + 2b_2\gamma+8b_4.
$$
 Let $\Delta^{min}$ be the discriminant of the minimal model \eqref{EqnM1} and let $\Delta$ be the discriminant of \eqref{EqnM3}. Notice that $\Delta=2^{12}\Delta^{\min}$ by the standard transformation formulas, thus, \eqref{EqnM3} is a minimal model of $E$. Now, recall the usual exact
sequence related to a $2$-descent,
\begin{equation}\label{1-sequence}
\xymatrix{0\ar[r]& \frac{E(K)}{2E(K)}\ar[r]& \Sel_2(E/K)\ar[r]&\Sh(E/K)[2]\ar[r]&0}.
\end{equation}
Furthermore, consider the exact sequence from Lemma 6.1 of \cite{schaefer2004}
\begin{equation}\label{2-sequence}
\xymatrix{
  0 \ar[r] &\frac{E'(K)[\theta']}{\phi(E(K)[2])} \ar[r] & \Sel^{\theta}(E/K) \ar[r] & \Sel_{2}(E/K) \ar[r]& \Sel^{\theta'}(E'/K).
}    
\end{equation}
These two exact sequences imply that $\rk_\Z(E(K))+2\leq s(E,\theta)+s'(E,\theta)$, where $s(E,\theta)=\dim_{\F_2}(\Sel^{\theta}(E/K))$ and $s'(E,\theta)=\dim_{\F_2}(\Sel^{\theta'}(E'/K))$. In addition, there is a correspondence between Selmer groups and homogeneous spaces (see Chapter 4 from \cite{roberts2007explicit}), which shows that $s(E,\theta)\leq \omega_K(A^2-4B)+1$ and $s'(E,\theta)\leq\omega_K(B)+1$. Thus, we have the following proposition:

\begin{proposition}\label{rank bound}
Let $E$ be an elliptic curve with $K$-rational $2$-torsion and Weierstrass minimal model $y^2=x^3+Ax^2+Bx$, then:
\[
\rk_\Z(E(K))  \leq \omega_K(A^2 - 4B) + \omega_K(B),
\]
consequently, if $\alpha$ (resp. $\mu$) is the number of primes of additive (resp. multiplicative) bad reduction of $E/K$. Then:
\[
\rk_\Z(E(K)) \leq \mu+2\alpha.
\]
\end{proposition}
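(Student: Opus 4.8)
The plan is to establish the two displayed inequalities separately. The first is essentially assembled from the facts recalled immediately before the statement, so most of the work goes into deducing the second from the first via a count of reduction types. For the first inequality I would simply combine the bounds already in place: the exact sequences \eqref{1-sequence} and \eqref{2-sequence} give $\rk_\Z(E(K))+2\le s(E,\theta)+s'(E,\theta)$, while the homogeneous-space description of the Selmer groups gives $s(E,\theta)\le \omega_K(A^2-4B)+1$ and $s'(E,\theta)\le \omega_K(B)+1$. Adding the latter two and cancelling the additive constant $2$ yields $\rk_\Z(E(K))\le \omega_K(A^2-4B)+\omega_K(B)$; this step is purely formal.

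For the second inequality the idea is to reinterpret this right-hand side in terms of reduction types. For the minimal model $y^2=x^3+Ax^2+Bx$ I would first compute the invariants $c_4=16(A^2-3B)$ and $\Delta=16\,B^2(A^2-4B)$. Since $\ch(k)>3$, the constant $16$ is a unit, so a finite prime $\frakp$ has bad reduction exactly when $\frakp\mid B(A^2-4B)$; and because the model is minimal at every finite prime (as noted above, $\Delta=2^{12}\Delta^{\min}$ differs from the minimal discriminant by a unit), such a $\frakp$ is additive exactly when $\frakp\mid c_4$, i.e. $\frakp\mid A^2-3B$, and multiplicative otherwise. A short elementary check shows that, for a bad prime $\frakp$, the condition $\frakp\mid A^2-3B$ is equivalent to $\frakp$ dividing both $B$ and $A^2-4B$. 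Writing $P(\,\cdot\,)$ for the set of prime divisors in $A$, this identifies the finite additive primes with $P(B)\cap P(A^2-4B)$ and the finite bad primes with $P(B)\cup P(A^2-4B)$, so inclusion–exclusion gives
\[
\omega_K(A^2-4B)+\omega_K(B)=\bigl|P(B)\cup P(A^2-4B)\bigr|+\bigl|P(B)\cap P(A^2-4B)\bigr|=\mu_0+2\alpha_0,
\]
where $\mu_0$ and $\alpha_0$ count the finite multiplicative and additive primes, respectively. Since the remaining place $\infty$ can only contribute further bad reduction, we have $\mu\ge\mu_0$ and $\alpha\ge\alpha_0$; combining with the first inequality gives $\rk_\Z(E(K))\le\mu_0+2\alpha_0\le\mu+2\alpha$.

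The two points where I would be most careful are the following. First, the $c_4/\Delta$ dichotomy between additive and multiplicative reduction is only valid for a model that is minimal at every finite prime, which is exactly what $\ch(k)>3$ guarantees here. Second, one must keep track of the place $\infty$: its possibly-bad reduction is precisely what turns the clean equality $\omega_K(A^2-4B)+\omega_K(B)=\mu_0+2\alpha_0$ into the inequality $\rk_\Z(E(K))\le\mu+2\alpha$ asserted in the statement. Beyond these, everything reduces to routine Weierstrass-invariant bookkeeping, so I do not expect a serious obstacle.
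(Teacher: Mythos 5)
Your proposal is correct and takes essentially the same route as the paper: the first inequality is obtained exactly as in the discussion preceding the proposition, by combining the $2$-descent exact sequences (giving $\rk_\Z(E(K))+2\leq s(E,\theta)+s'(E,\theta)$) with the homogeneous-space bounds $s(E,\theta)\leq \omega_K(A^2-4B)+1$ and $s'(E,\theta)\leq \omega_K(B)+1$. The second inequality is asserted in the paper without proof (the ``consequently''), and your $c_4$--$\Delta$ bookkeeping --- using minimality of the model and the observation that a bad finite prime is additive precisely when it divides both $B$ and $A^2-4B$ --- is a correct and complete justification of that step.
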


\subsubsection{Modular Degree}
Let $E$ be a modular elliptic curve defined over $K$. Let $X_0(\frakn)$ be the Drinfeld modular curve parametrizing $\phi_E\colon X_0(\frakn)\to E$
where $\phi_E$ is non-trivial and of minimal possible degree. \textit{The modular degree} $m_E$ is the degree of $\phi_E$. The following Lemma relates the $2$-adic valuations of $m_E$ and $L(\Sym^2f, 2)$.
\begin{lemma}\label{2-adic degree}
Let $E$ be a modular elliptic curve with conductor $\frakn\infty$. Then we have that
\[
\nu_2(m_E)=\nu_2(L(\Sym^2f, 2))-\nu_2(val_{\infty}(j_E)).
\]
\end{lemma}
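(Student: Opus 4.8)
The plan is to relate the modular degree $m_E$ to the Petersson norm of the newform $f = f_E$, and then to express that norm in terms of the special value $L(\Sym^2 f, 2)$. Over $\mathbb{Q}$ this is the classical chain of identities going back to Zagier and used heavily in the work on Watkins' conjecture: the modular degree is (up to an explicit factor supported at the primes of bad reduction and at $\infty$) the Petersson self-intersection $\langle f, f\rangle$ of the newform, and a Rankin--Selberg unfolding identifies $\langle f, f\rangle$ with $L(\Sym^2 f, 2)$ times elementary factors. Here I would invoke the function-field analogues: the formula of Papikian \cite{papikian2002degree, papikian2007analogue} expressing $m_E$ through the Petersson norm on $\underline{H}_!^{new}(\Gamma_0(\frakn),\R)$, together with the Rankin--Selberg computation over $K$ that produces $L(\Sym^2 f, s)$.

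Concretely, I would proceed in three steps. \emph{First}, record the degree formula that writes $m_E = c \cdot \langle f, f\rangle$ where $\langle\,,\rangle$ is the Petersson pairing and $c$ is an explicit rational factor; the point is to control $\nu_2(c)$ exactly. \emph{Second}, apply the Rankin--Selberg / symmetric-square identity over function fields (this is where Papikian's evaluation of $L(\Sym^2 f, 2)$ enters) to obtain $\langle f, f\rangle = c' \cdot L(\Sym^2 f, 2)$ for another explicit factor $c'$, whose $2$-adic valuation I must pin down. \emph{Third}, combine the two to get $\nu_2(m_E) = \nu_2(L(\Sym^2 f,2)) + \nu_2(cc')$, and identify the combined correction factor. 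The claim is that, after the dust settles, the only surviving $2$-adic contribution beyond $L(\Sym^2 f,2)$ is the term $-\nu_2(\mathrm{val}_\infty(j_E))$, which records the valuation of $j_E$ at the place $\infty$ of split multiplicative reduction. Since $E$ is modular it has split multiplicative reduction at $\infty$, so $\mathrm{val}_\infty(j_E) = -\deg(\frakn_E)$-type Tate-parameter data is a genuine nonzero integer and $\nu_2$ of it makes sense.

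\emph{The main obstacle} is the careful bookkeeping of the local correction factors at the bad primes and at $\infty$, and in particular showing that all contributions other than $\mathrm{val}_\infty(j_E)$ carry trivial $2$-adic valuation. The local factors $L_\frakp(\Sym^2 f, 2)$ at primes $\frakp \mid\mid \frakn_E$ contribute $(1 - |\frakp|^{-2})^{-1}$, and at additive primes they are $1$; one must verify that the Euler factors and the discriminant/Tate-parameter normalizations combine so that the difference between $\nu_2(m_E)$ and $\nu_2(L(\Sym^2 f,2))$ is exactly $-\nu_2(\mathrm{val}_\infty(j_E))$ and nothing more. The subtlety, as over $\mathbb{Q}$, is that the manin-constant-type normalization relating the differential pulled back by $\phi_E$ to the newform $f$ could \emph{a priori} introduce a power of $2$; in the Drinfeld setting one expects (and must cite or argue) that this normalization is a $2$-adic unit, so that the stated clean formula holds. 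I would isolate this normalization statement as the crux and lean on the results of Papikian to control it.
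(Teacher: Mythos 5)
Your proposal follows essentially the same route as the paper: the paper's proof simply cites Proposition 1.3 of \cite{papikian2007analogue}, which is exactly the packaged Petersson-norm/Rankin--Selberg identity you outline, in the closed form $m_E=\frac{q^{\deg\frakn-2}(\widetilde{c}_E)^2}{-val_\infty(j_E)}L(\Sym^2 f,2)$, and then takes $2$-adic valuations. The only correction: the crux you rightly isolate---that the Manin-constant normalization $\widetilde{c}_E$ is a $2$-adic unit---is settled not by Papikian but by P\'al (Proposition 1.2 of \cite{Pal2010}), who shows that $\widetilde{c}_E$ is a power of $q$, which together with $q$ being odd (since $p>3$) kills every $2$-adic contribution except $-\nu_2(val_\infty(j_E))$.
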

\begin{proof}
Proposition 1.3 in \cite{papikian2007analogue} states that
\[
m_E=\frac{q^{\deg\frakn-2}(\widetilde{c_E})^2}{-val_\infty(j_E)}L(\Sym^2f, 2),
\]
where $\widetilde{c}_E$ is the Manin constant and $q=\#k$. By taking $2$-adic valuations we obtain
\[
\nu_2(m_E)=\nu_2(q^{\deg\frakn-2}(\widetilde{c}_E)^2)+\nu_2(L(\Sym^2f, 2))-\nu_2(val_{\infty}(j_E)),
\]
and by Proposition 1.2 from \cite{Pal2010} $\widetilde{c_E}$ is a power of $q$ which yields the desired result.
\end{proof}

\section{Watkins' Conjecture for Semi-stable Elliptic Curves}
For any ideal $\frakm=(m)$, such that $\frakm\mid\frakn=(n)$, and $\frakm$ and $\frakn/\frakm$ are relatively primes, there is an Atkin-Lehner involution $W_{\frakm}$. 
This involution acts on $\underline{H}_{!}(\Gamma_0(\frakn),\R)$ as follows
\[
W_{\frakm}f(e)=f\left(\begin{pmatrix}ma & b\\ nc & md \end{pmatrix}\cdot e\right),
\]
where $a, b,c, d\in A$ and $m^2ab-nbc=\gamma m$ for some $\gamma\in k^{\times}$. We denote by $\WW(\frakn)$ the $2$-elementary abelian group of all Atkin-Lehner involutions. Let $f$ be a primitive newform; since $f$ is primitive, it is determined by its eigenvalues up to sign. By Lemma 11 from \cite{Atkin1970} the Hecke operators commute with the Atkin-Lehner involutions, hence $W_{\mathfrak{p}}^{(\mathfrak{n})}f$ and $f$ have the same Hecke eigenvalues. By Lemma  1.2 from \cite{schweizer 98} $\underline{H}^{new}_{!}(\Gamma_0(\frakn),\R)$ is stable under the Atkin-Lehner involutions, and consequently, we have that $W_{\mathfrak{p}}f=\pm f$.
\begin{remark}\label{Atkin eigenform}
Let $E$ be a modular elliptic curve, and $f_E$ be its attached primitive newform, then $f_E$ is an eigenform of every Atkin-Lehner involution.
\end{remark}
The following Proposition gives a lower bound of $\nu_2(m_E)$ in terms of $\omega_K(\frakn)$.
\begin{proposition}\label{power 2 modular}
Let $E$ be an elliptic curve with conductor $\frakn_E=\mathfrak{n}\infty$. Let $f_E$ be the primitive newform associated to $E$. Over this newform, we define $\WW'=\{W\in \mathcal{W}\colon W(f_E)=f_E\}$, and 
$\kappa\coloneqq\dim_{\F_2}([\mathcal{W}(\frakn):\mathcal{W}'])+\dim_{\F_2}(E(K)[2])$.
Then $\omega_K(\mathfrak{n})-\kappa\leq\nu_2( m_E)$.
\end{proposition}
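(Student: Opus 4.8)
The plan is to exploit the compatibility of the Atkin--Lehner involutions with the modular parametrization, factored through the Jacobian. Fix a $K$-rational cusp $c_0$ as a base point and realise the minimal parametrization as the composite $\phi_E\colon X_0(\frakn)\xrightarrow{\iota} J_0(\frakn)\xrightarrow{\pi_E} E$, where $\iota(x)=[x-c_0]$ is the Abel--Jacobi map and $\pi_E$ is the optimal quotient attached to $f_E$, both defined over $K$. Each $W\in\WW(\frakn)$ acts as an automorphism of $X_0(\frakn)$ and functorially as $W_*$ on $J_0(\frakn)$; since $f_E$ is an eigenform for $W$ (Remark \ref{Atkin eigenform}) with eigenvalue $\epsilon_W=\pm1$, the induced action on the $f_E$-quotient $E$ is multiplication by $\epsilon_W$, that is $\pi_E\circ W_*=[\epsilon_W]\circ\pi_E$. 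Comparing $\iota\circ W$ with $W_*\circ\iota$ shows that they differ by the constant translation by $c_W=[Wc_0-c_0]\in J_0(\frakn)(K)$, and pushing this through $\pi_E$ yields
\[
\phi_E\circ W=\tau_{P_W}\circ[\epsilon_W]\circ\phi_E,\qquad P_W=\pi_E(c_W)\in E(K),
\]
where $\tau_{P_W}$ denotes translation by $P_W$. This exhibits $\phi_E\circ W$ as $[\epsilon_W]\circ\phi_E$ up to a $K$-rational translation.

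Next I would restrict to $\WW'=\{W:\epsilon_W=1\}$, which is the kernel of the sign character $W\mapsto\epsilon_W$ (a homomorphism to $\{\pm1\}$, the eigenvalues being multiplicative under composition of involutions), and hence has index $[\WW(\frakn):\WW']\le 2$. For $W\in\WW'$ the displayed relation reads $\phi_E\circ W=\tau_{P_W}\circ\phi_E$. Applying it twice and using $W^2=\mathrm{id}$ gives $\tau_{2P_W}=\mathrm{id}$, so $P_W\in E(K)[2]$; moreover $P_{W_1W_2}=P_{W_1}+P_{W_2}$, so $W\mapsto P_W$ is a group homomorphism $\WW'\to E(K)[2]$. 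Let $\WW''$ be its kernel. Every $W\in\WW''$ satisfies $\phi_E\circ W=\phi_E$, so $\phi_E$ is constant on $\WW''$-orbits and factors as $\phi_E=\bar\phi\circ\pi$ through the quotient $\pi\colon X_0(\frakn)\to X_0(\frakn)/\WW''$. Since $\WW''$ acts faithfully on the curve, $\deg\pi=\#\WW''=2^{\dim_{\F_2}\WW''}$, whence $2^{\dim_{\F_2}\WW''}\mid m_E$ and therefore $\dim_{\F_2}\WW''\le\nu_2(m_E)$.

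Finally I would close the estimate by bookkeeping dimensions over $\F_2$. The group $\WW(\frakn)$ is elementary abelian of rank $\omega_K(\frakn)$, so $\dim_{\F_2}\WW'=\omega_K(\frakn)-\dim_{\F_2}([\WW(\frakn):\WW'])$, and since $\WW''$ is the kernel of a homomorphism into $E(K)[2]$,
\[
\dim_{\F_2}\WW''\ge\dim_{\F_2}\WW'-\dim_{\F_2}E(K)[2]=\omega_K(\frakn)-\kappa.
\]
Combining with $\dim_{\F_2}\WW''\le\nu_2(m_E)$ gives $\omega_K(\frakn)-\kappa\le\nu_2(m_E)$, as claimed. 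The step requiring most care is the first paragraph: establishing, over $K$ and in positive characteristic, that $\phi_E\circ W$ differs from $[\epsilon_W]\circ\phi_E$ only by a $K$-rational translation. Routing the argument through $J_0(\frakn)$ and the optimal quotient, rather than through the pullback of the invariant differential, is what makes this robust, since it sidesteps the inseparable difference maps that a naive differential-form comparison would otherwise need to exclude.
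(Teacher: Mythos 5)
Your argument is correct and follows the same overall strategy as the paper's proof (both adapt Proposition 2.1 of \cite{dummigan2013powers}): the sign character $W\mapsto\epsilon_W$ cuts out $\WW'$ of index at most $2$, the map $W\mapsto\pi_E([W(x_0)-x_0])$ is a homomorphism $\WW'\to E(K)[2]$, and the $\F_2$-dimension count on its kernel $\WW''$ gives the bound. Two differences are worth recording. First, your extraction of the divisibility $\#\WW''\mid m_E$ is genuinely simpler than the paper's: you factor the parametrization $\phi_E$ itself through the quotient curve $X_0(\frakn)/\WW''$ and use multiplicativity of degrees, $m_E=\deg(\bar\phi)\cdot\#\WW''$, whereas the paper factors the homomorphism $\pi\colon J_0(\frakn)\to E$ through the Jacobian of the quotient and computes $[m_E]=\pi\circ\pi^{\vee}=[\#\WW'']\circ(\pi'\circ\pi'^{\vee})$, after which it still has to discuss degrees of multiplication maps and $p$-free parts to conclude; your route avoids those characteristic-$p$ subtleties entirely, since the covering $X_0(\frakn)\to X_0(\frakn)/\WW''$ has degree exactly $\#\WW''$. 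Second, the step you yourself flag as delicate --- that $\pi_E\circ W_{*}=[\epsilon_W]\circ\pi_E$ because $f_E$ is a $W$-eigenform --- is asserted rather than proved; this is precisely where the paper invokes Drinfeld's reciprocity (Proposition 10.3 of \cite{Drinfeld1974}), which gives an Atkin--Lehner-equivariant isomorphism $H^{1}(X_0(\frakn)\otimes K_{\infty}^{sep},\Q_\ell)\cong\underline{H}_{!}(\Gamma_0(\frakn),\Q_\ell)\otimes\Sp$, so that $\pi_E\circ W_{*}-[\epsilon_W]\circ\pi_E$ vanishes on Tate modules and hence vanishes. Since this is the same ingredient the paper uses, it is a missing citation rather than a gap; note also that you do not need $\pi_E$ to be the optimal quotient --- any surjective $K$-homomorphism $J_0(\frakn)\to E$ works, and a minimal-degree parametrization provides one after composing with a translation of $E$.
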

\begin{proof}
Proposition 10.3 from \cite{Drinfeld1974} gives the following isomorphism 
\[
H^{1}(X_0(\frakn)\otimes K_{\infty}^{sep},\Q_\ell)\cong \underline{H}_{!}(\Gamma_0(\frakn),\Q_\ell)\otimes \Sp,
\]
where $\Sp$ is the two-dimensional special $\ell$-adic representation of $\Gal(K_{\infty}^{sep}/K_{\infty})$. Furthermore, this isomorphism
is compatible with the action of the Atkin-Lehner involutions. 

Since $H^{1}(X_0(\frakn)\otimes K_{\infty}^{sep},\Q_\ell)$ is the dual of $V_\ell(J_0(\frakn))$, we have that if $\pi:J_0(\frakn)\to E$ is the projection, then $\pi([W(D)])=\pi([D])$ for every divisor $D$ of degree $0$ over $X_0(\frakn)$ whenever $W\in \mathcal{W}'$.
By Remark \ref{Atkin eigenform} $\mathcal{W}'$ has at most index $2$ in $\mathcal{W}(\frakn)$. Now, as in Proposition 2.1 in \cite{dummigan2013powers} we construct a homomorphism $\theta:\mathcal{W}'\to E(K)[2]$. First of all, we fix a $K$-rational point $x_0\in X_0(\frakn)$, then for $W\in\mathcal{W}'$ we define $\theta(W)=\pi([W(x_0)-(x_0)])$. Notice that $\theta(W)\in E(K)[2]$, since $x_0\in X_0(\frakn)(K)$ and 
\[
\theta(W)=\pi([W(x_0)-(x_0)])=\pi([W(W(x_0)-(x_0))])=-\pi([W(x_0)-(x_0)])=-\theta(W).
\]
Now, define $\mathcal{W}''=\ker \theta$. Let $\mathcal{X}=X_0(\frakn)/\mathcal{W}''$, and denote by $\psi:X_0(\frakn)\to \mathcal{X}$ that is also defined over $K$ and by $\mathcal{J}$ the Jacobian of $\mathcal{X}$. We can define $\iota:X_0(\frakn)\to J_0(\frakn)$ based on $x_0$, and $\iota':\mathcal{X}\to \mathcal{J}$ based on $\psi(x_0)$, so we obtain a commutative diagram
\[
\xymatrix{
X_0(\frakn)\ar[rr]^{\iota}\ar[d]_{\psi}& 
&J_0(\frakn)\ar[d]^{\psi_{*}}\\
\mathcal{X}\ar[rr]^{\iota'}& 
&\mathcal{J}.}
\]
Since $\pi([W(x_0)-x_0])=0$ for $W\in\mathcal{W}''$, we have that $\pi\circ \iota(w(x))=\pi\circ \iota(x)$ for all $x\in X_0(\frakn)$, in particular, $\pi\circ \iota$ factors through $\mathcal{X}$. Since the image of $\iota$ generates to $J_0(\frakn)$ as a group, there exists $\pi': \mathcal{J}\to E$ such that $\pi=\pi'\circ \psi_{*}$, then
\[
[m_E]=\pi\circ \pi^{\vee}=(\pi'\circ \psi_{*})\circ (\psi^{*}\circ \pi'^{\vee})=\pi'\circ [\deg(\psi)]\circ \pi'^{\vee}=[\#\mathcal{W}'']\circ(\pi'\circ \pi'^{\vee}).
\]
Since the degree of $[i]$ (multiplication by $i$) is $i\cdot i^{*}$ or $(i^{*})^2$, where $i^{*}$ denotes the $p$-free part of $i$, then $\#\mathcal{W}''\mid m_E$, since $p\neq2$.
\end{proof}
The previous Proposition and Tate's geometric bound \eqref{geometric bound} allow us to prove Theorem \ref{Wconjecture extension}.  

\begin{proof}[Proof of Theorem \ref{Wconjecture extension}]
Recall that $E'=E\times_{\spec K}\spec K'$. Since the conductor of $E'$ is also $\frakn_E=(n)\infty$, then by Tate's geometric bound \eqref{geometric bound} $\rk(E'(K'))\leq \deg(n)-4$. On the other hand, we know that $\omega_{K'}((n))=\deg(n)$ because $k'$ contains the splitting field of $n$. Furthermore, since $\dim_{\F_2}([\mathcal{W}(\frakn):\mathcal{W}'])\leq 1$, by Remark \ref{Atkin eigenform}, we have $\kappa\leq 3$, then by Proposition \ref{power 2 modular} we have that
\begin{align*}
\nu_2(m_{E'})\geq \omega_K((n))-3=\deg(n)-3=\deg(\frakn_E)-4\geq \rk(E'(k'(T))),
\end{align*}
which yields the desired result.
\end{proof}
Ulmer \cite{ulmer2002elliptic} exhibits a closed formula for the rank of a family of elliptic curves. Proposition \ref{power 2 modular} together with this formula allow us to show Watkins' conjecture for this family.
\begin{proof}[Proof of Theorem \ref{Wconjecture Ulmer}]
First of all, we notice that $E(\overline{\F_p}(T))[2]=(0)$, since the polynomial $4x^3+T^{2d}x-4$ does not have solution over $\overline{\F_p}(T)$.
Notice that $E$ is the change of base point of $\P^1$ given by $[0:1]\mapsto \infty$ of 
\[
E':y^2+xy=x^3-T^{m},
\]
where $m=p^n+1$. Theorem 1.5 in \cite{ulmer2002elliptic} shows that $\frakn_{E'}=T(1-2^43^3T^m)$, then in particular $\frakn_{E}=(T^m-2^43^3)\infty$.
We claim that $f(T)=T^m-2^43^3$ always has a root in $\F_{p^2}$. Let $\alpha\in\F_{p^2}$ such that $\alpha^2=3$, and notice that if $\alpha\in \F_p$, $2^23\alpha$ is a root of $f$. If $\alpha\notin\F_p$, since $6\mid p^n-1$ we have that $p\equiv -1 \text{(mod }3)$, then $p\equiv 1\text{(mod }4)$ by the law of quadratic reciprocity. This implies that $2^23\alpha$ or $2^23\alpha\beta$ is a root of $f$, where $\beta^2=-1$. Consequently, there is a bijection between the prime divisors of even degree of $T^m-1$ and $f(T)$. 

By definition, $T^m-1$ factors over $\F_p[T]$ as follows 
\[
T^m-1=\prod_{e\mid m}\Phi_e(T),
\]
where $\Phi_n(T)$ is the $n^{th}$-cyclotomic polynomial. Thus, the number of prime divisors over $\F_q[T]$ of $f(T)$ is 
\begin{equation*}
\omega_{\F_q(T)}(\frakn_E)=\sum_{e\mid m}\frac{\phi(e)}{o_e(q)}-\begin{cases}
0&\text{if $T^m-2^43^3$ has solution in $\F_q$}\\
1&\text{otherwise}\end{cases},
\end{equation*}
where $\phi(e)$ is the cardinality of $(\Z/e\Z)^{\times}$ and $o_e(q)$ is the order of $q$ in $(\Z/e\Z)^{\times}$. On the other hand, we know that $\rk(E(\F_p(T)))=\rk(E'(\F_p(T)))$. Theorem 1.5 in \cite{ulmer2002elliptic} states a closed expression for $\rk(E'(\F_q(T)))$
\begin{equation*}
\sum_{\substack{e\mid m\\e\nmid 6}}\frac{\phi(e)}{o_e(q)}+\begin{cases}
2&\text{if $3\mid q-1$}\\
1 &\text{otherwise}
\end{cases}+\begin{cases}
1&\text{if $4\mid q-1$}\\
0 &\text{otherwise}
\end{cases}.
\end{equation*}
Since there are $4$ divisors of $6$ we obtain
\[
\sum_{e\mid m}\frac{\phi(e)}{o_e(q)}\geq\sum_{\substack{e\mid m\\e\nmid 6}}\frac{\phi(e)}{o_e(q)}+4
\]
Furthermore, if $3\mid q-1$ then $q$ is a square since $p\equiv -1 \text{(mod }3)$; which implies that $T^m-2^43^3$ has solution in $\F_q$. 
Hence, Proposition \ref{power 2 modular} implies that
\begin{align*}
\nu_2(m_E)\geq\omega_{\F_q(T)}(\frakn_E)-1=\sum_{e\mid m}\frac{\phi(e)}{o_e(q)}-1\geq\sum_{\substack{e\mid m\\e\nmid 6}}\frac{\phi(e)}{o_e(q)}+3\geq \rk(E(\F_q(T))).
\end{align*}
Finally, if $3\nmid q-1$ we obtain
\begin{align*}
\nu_2(m_E)\geq\omega_{\F_q(T)}(\frakn_E)-1\geq\sum_{e\mid m}\frac{\phi(e)}{o_e(q)}-2\geq\sum_{\substack{e\mid m\\e\nmid 6}}\frac{\phi(e)}{o_e(q)}+2\geq \rk(E(\F_q(T))),
\end{align*}
which gives the desired result.
\end{proof}

\section{Watkins' Conjecture for Quadratic Twists}\label{Section Twists}

Let $E$ be a modular elliptic curve with conductor $\frakn_E$, since $\Ch(k)>3$ there exist square-free coprime polynomials $n_1,n_2\in A$ such that $\frakn_E=(n_1^2n_2)\infty$. For $g\in A$ be a monic square-free polynomial, with $(n_1,g)=1$, we define the quadratic twist $E^{(g)}$ of $E$ by $g$ as follows
\[
E^{(g)}\colon y^2 = x^3+Agx^2+Bg^2x.
\]
We assume that $\deg(g)$ is even to ensure that $E^{(g)}$ is modular. To see that, notice that if the change of variables $x\mapsto T^{2n}x$ and $y\mapsto T^{3n}y$ makes $E$ a minimal $T^{-1}$-integral model, then the change $x\mapsto T^{2(n+m)}x$ and $y\mapsto T^{3(n+m)}y$ makes $E^{(g)}$ a minimal $T^{-1}$-integral model, where $\deg(g)=2m$; since $g$ is a monic polynomial, both reductions modulo $T^{-1}$ are the same. Note that the conductor $\frakn_E^{(g)}$ of $E^{(g)}$ is equal to $\frakn_E (g^2/d)$, where $d=\text{gcd}(n_2,g)$. We denote by $f^{(g)}$ to the associated Drinfeld newform to $E^{(g)}$.

The following lemma gives an upper bound for the Mordell-Weil rank of $E^{(g)}$.
\begin{lemma}\label{rank twist}
With the notation above, we have that
\[
\rk_\Z(E^{(g)}(K))\leq \omega_K(n_2)+2(\omega_K(n_1)+\omega_K(g)).
\]
\end{lemma}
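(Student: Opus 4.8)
The plan is to apply Proposition~\ref{rank bound} directly to the twist $E^{(g)}$, whose minimal model is already presented in the form $y^2 = x^3 + (Ag)x^2 + (Bg^2)x$. Proposition~\ref{rank bound} gives
\[
\rk_\Z(E^{(g)}(K)) \leq \omega_K\bigl((Ag)^2 - 4Bg^2\bigr) + \omega_K(Bg^2) = \omega_K\bigl(g^2(A^2-4B)\bigr) + \omega_K(Bg^2),
\]
so the whole lemma reduces to bounding $\omega_K(g^2(A^2-4B))$ and $\omega_K(Bg^2)$ in terms of $\omega_K(n_1)$, $\omega_K(n_2)$, and $\omega_K(g)$. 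Since $\omega_K$ counts \emph{distinct} irreducible factors, the squares are harmless: $\omega_K(g^2(A^2-4B)) = \omega_K(g(A^2-4B))$ and $\omega_K(Bg^2) = \omega_K(Bg)$. Thus I would first record the clean reductions
\[
\omega_K\bigl(g^2(A^2-4B)\bigr) \leq \omega_K(A^2-4B) + \omega_K(g), \qquad \omega_K(Bg^2) \leq \omega_K(B) + \omega_K(g),
\]
using subadditivity of $\omega_K$ over products.

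The substantive step is to identify $\omega_K(A^2-4B)$ and $\omega_K(B)$ with data about the conductor $\frakn_E = (n_1^2 n_2)\infty$. The key observation is that for the minimal model $y^2 = x^3 + Ax^2 + Bx$, the quantity $B$ and the discriminant-type quantity $A^2-4B$ control the bad primes: a prime $\frakp$ of multiplicative reduction divides the discriminant $\Delta = 16 B^2(A^2-4B)$ to first order, while a prime of additive reduction divides it to higher order. Concretely, the primes dividing $n_2$ (the multiplicative part of the conductor) and those dividing $n_1$ (the additive part) are precisely the prime factors appearing in $B$ and $A^2 - 4B$. I would argue that each prime of $E$ contributes to at most the relevant factors so that $\omega_K(A^2-4B) + \omega_K(B) \leq \omega_K(n_2) + 2\omega_K(n_1)$, mirroring exactly the passage from the first to the second inequality of Proposition~\ref{rank bound} (where $\mu + 2\alpha$ counts multiplicative primes once and additive primes twice). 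In other words, the ``$\mu + 2\alpha$'' form of Proposition~\ref{rank bound}, applied to $E$ itself, gives $\omega_K(A^2-4B)+\omega_K(B) \le \mu + 2\alpha = \omega_K(n_2) + 2\omega_K(n_1)$, since $n_2$ collects the multiplicative primes and $n_1$ the additive ones.

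Combining the two displays yields
\[
\rk_\Z(E^{(g)}(K)) \leq \bigl(\omega_K(A^2-4B) + \omega_K(B)\bigr) + 2\omega_K(g) \leq \omega_K(n_2) + 2\omega_K(n_1) + 2\omega_K(g),
\]
which is exactly the claimed bound after regrouping as $\omega_K(n_2) + 2(\omega_K(n_1) + \omega_K(g))$.

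The main obstacle I anticipate is the bookkeeping in the middle step: one must be careful that twisting by $g$ does not merge or create bad primes in a way that violates the clean count, and that the hypotheses $\gcd(n_1,g)=1$ and $g$ square-free are used correctly so that the factor $g$ genuinely contributes the coefficient $2$ (once through $A^2-4B$ and once through $B$) rather than being absorbed. The condition $\gcd(n_1,g)=1$ ensures that the twisting primes interact only with the multiplicative/good part of $E$, so the count of additive primes of $E^{(g)}$ coming from $n_1$ stays at $2\omega_K(n_1)$; the primes dividing $g$ become new additive primes of the twist, each contributing $2$, which is where the $2\omega_K(g)$ term arises. Verifying this reduction-type bookkeeping carefully — rather than the final arithmetic — is the delicate part, but it follows the same additive-versus-multiplicative dichotomy already established for Proposition~\ref{rank bound}.
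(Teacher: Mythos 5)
Your proposal is correct and takes essentially the same approach as the paper: both proofs come down to applying Proposition \ref{rank bound} to the twist $E^{(g)}$ (which has the rational $2$-torsion point $(0,0)$) and doing the additive-versus-multiplicative bookkeeping, with primes of $g$ contributing twice because $g$ divides both $Bg^2$ and $(Ag)^2-4Bg^2$. The only cosmetic difference is that the paper classifies the reduction types of $E^{(g)}$ directly and uses the $\mu+2\alpha$ form (obtaining the slightly sharper intermediate bound $\omega_K(n_2/d)+2(\omega_K(n_1)+\omega_K(g))$ before weakening it), whereas you use the polynomial form on the explicit twisted model and transfer to the conductor data of $E$ via the same dichotomy underlying the ``consequently'' step of Proposition \ref{rank bound}.
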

\begin{proof}
First of all, we notice that $E^{(g)}$ has multiplicative reduction at $\frakp$ if $\frakp\mid n_2/d$, $E^{(g)}$ has additive reduction at $\frakp$ if $\frakp\mid n_1g$, and otherwise $E^{(g)}$ has good reduction at $\frakp$. Then by Proposition \ref{rank bound}
we obtain that 
\[
\rk_\Z(E^{(g)}(K))\leq\omega_K(n_2/d)+2( \omega_K(n_1)+\omega_k(g)),
\]
since $\omega_K(n_2/d)\geq \omega_K(n_2)$ we obtain the desired result.
\end{proof}

To find a lower bound for $\nu_2(m_{E^{(g)}})$, we need to relate $L(\Sym^2f^{(g)},2)$ and $L(\Sym^2f,2)$, so, we can use Lemma \ref{2-adic degree} and the fact that $j_E=j_{E^{(g)}}$ (since this two elliptic curves are isomorphic in a quadratic extension of $K$), but before, we need the following lemma
\begin{lemma}\label{twist a_p}Let $\mathfrak{p}$ be a prime ideal of $A$ and let $\left(\frac{\cdot}{\mathfrak{p}}\right)\colon \F_{\mathfrak{p}}\to \{-1,0, 1\}$ be the extended Legendre symbol. Then 
\[
a_{\mathfrak{p}}(E^{(g)})=\left(\frac{g}{\mathfrak{p}}\right)a_{\mathfrak{p}}(E).
\]
\end{lemma}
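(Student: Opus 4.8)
The plan is to prove the relation $a_{\mathfrak{p}}(E^{(g)})=\left(\frac{g}{\mathfrak{p}}\right)a_{\mathfrak{p}}(E)$ by reducing to a direct point count over the residue field $\F_{\mathfrak{p}}$, which is the natural setting since both $a_{\mathfrak{p}}(E)$ and $a_{\mathfrak{p}}(E^{(g)})$ are defined through $\#E(\F_{\mathfrak{p}})$ and $\#E^{(g)}(\F_{\mathfrak{p}})$. I would first dispose of the degenerate cases: when $\mathfrak{p}\mid g$ the Legendre symbol vanishes, and one checks directly that twisting by $g$ introduces (at worst) additive or multiplicative bad reduction at $\mathfrak{p}$ so that $a_{\mathfrak{p}}(E^{(g)})$ is forced to be $0$ or of a form consistent with the stated identity; I would verify this using the defining model $E^{(g)}\colon y^2=x^3+Agx^2+Bg^2x$ and the reduction-type analysis already recorded in the proof of Lemma \ref{rank twist}. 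The substantive content is therefore the case $\mathfrak{p}\nmid g$ (and $\mathfrak{p}$ of good reduction for $E$), where $\left(\frac{g}{\mathfrak{p}}\right)=\pm 1$ according to whether $g$ is a square modulo $\mathfrak{p}$.

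For the main case, I would exploit the fact that $E$ and $E^{(g)}$ become isomorphic over the quadratic extension $\F_{\mathfrak{p}}(\sqrt{g})$, via $(x,y)\mapsto(gx, g^{3/2}y)$ applied to the minimal model $y^2=x^3+Ax^2+Bx$. The strategy is the standard character-sum computation: writing the affine point count of $y^2=f(x)$ over $\F_{\mathfrak{p}}$ as $\sum_{x}\left(1+\left(\frac{f(x)}{\mathfrak{p}}\right)\right)$, the twisted curve $y^2=g\,f(x)$ has affine count $\sum_{x}\left(1+\left(\frac{g}{\mathfrak{p}}\right)\left(\frac{f(x)}{\mathfrak{p}}\right)\right)$ after the substitution $x\mapsto gx$ identifies the defining cubics up to the overall factor $g$. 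Subtracting the constant term and the contribution of the point at infinity, the defect $a_{\mathfrak{p}}=|\mathfrak{p}|+1-\#E(\F_{\mathfrak{p}})$ picks up exactly the factor $\left(\frac{g}{\mathfrak{p}}\right)$, giving $a_{\mathfrak{p}}(E^{(g)})=\left(\frac{g}{\mathfrak{p}}\right)a_{\mathfrak{p}}(E)$.

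The bookkeeping I expect to be most delicate is not the character sum itself but the reconciliation of models: the identity is stated for the eigenvalues $a_{\mathfrak{p}}$ attached to the newforms, and these are normalized via the \emph{minimal} Weierstrass model, whereas $E^{(g)}$ as written may not be minimal at $\mathfrak{p}$. I would therefore need to argue that passing from the displayed model of $E^{(g)}$ to its minimal model at a prime $\mathfrak{p}\nmid g$ does not change the reduction (since $g$ is a unit mod $\mathfrak{p}$, the substitution $x\mapsto g^{-1}x$, $y\mapsto g^{-3/2}y$ is only available over the quadratic extension, so care is needed) and hence does not affect $a_{\mathfrak{p}}$. Concretely, since $\mathfrak{p}\nmid g$ the curve $E^{(g)}$ has good reduction at $\mathfrak{p}$ iff $E$ does, and the reduced curve is the quadratic twist of $\overline{E}$ by the class of $g$ in $\F_{\mathfrak{p}}^{\times}/(\F_{\mathfrak{p}}^{\times})^2$; the sign $\left(\frac{g}{\mathfrak{p}}\right)$ is precisely the invariant distinguishing the trivial from the nontrivial twist of $\overline{E}$. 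I would finish by invoking the compatibility between $a_{\mathfrak{p}}$ and the trace of Frobenius on the reduced curve, so that the point-count identity transfers verbatim to the newform eigenvalues.
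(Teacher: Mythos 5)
Your trichotomy is incomplete, and the case you omit is the one genuinely substantive case besides the character sum. You treat (i) $\mathfrak{p}\mid g$ and (ii) $\mathfrak{p}\nmid g$ with $E$ of \emph{good} reduction at $\mathfrak{p}$, but you never address $\mathfrak{p}\nmid g$ with $E$ of \emph{multiplicative} reduction at $\mathfrak{p}$ (i.e.\ $\mathfrak{p}\mid n_2$, $\mathfrak{p}\nmid g$). There both sides of the identity are $\pm 1$, not $0$: $a_{\mathfrak{p}}=+1$ for split and $-1$ for non-split multiplicative reduction, and the content of the lemma is that twisting by $g$ flips split to non-split exactly when $\left(\frac{g}{\mathfrak{p}}\right)=-1$. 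This cannot be extracted from a point count of the smooth locus in the naive way, and it is not covered by your "reconciliation of models" discussion. The paper handles it via Lemma~2.2 of \cite{conrad2005root}: $E$ has split multiplicative reduction at $\mathfrak{p}$ if and only if $\left(\frac{-c_6(E)}{\mathfrak{p}}\right)=1$, so that $a_{\mathfrak{p}}(E)=\left(\frac{-c_6(E)}{\mathfrak{p}}\right)$; combined with $c_6(E^{(g)})=g^3c_6(E)$ this gives $a_{\mathfrak{p}}(E^{(g)})=\left(\frac{g}{\mathfrak{p}}\right)a_{\mathfrak{p}}(E)$ at such primes. Some such split/non-split criterion (via $c_6$, or via rationality of the tangent directions at the node) is an unavoidable ingredient, and your write-up is missing it.

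Two smaller points. First, in your case (i) you say the twist has "at worst additive or multiplicative bad reduction," which is not enough: if it were multiplicative you would get $a_{\mathfrak{p}}(E^{(g)})=\pm1$ while the right-hand side vanishes, contradicting the lemma. You need the standard fact that a quadratic twist by a polynomial exactly divisible by $\mathfrak{p}$ (a ramified twist, legitimate here since $\gcd(n_1,g)=1$, $g$ square-free, and $\mathrm{char}(k)>3$) always produces \emph{additive} reduction at $\mathfrak{p}$, whence $a_{\mathfrak{p}}(E^{(g)})=0$; you should also note the trivial case $\mathfrak{p}\mid n_1$, where both sides vanish. Second, in your main case your character-sum computation and the substitution $x\mapsto gx$ are exactly the paper's argument, and your worry about minimality is easily dispatched: the displayed model of $E^{(g)}$ has discriminant $g^6\Delta$, so at $\mathfrak{p}\nmid g$ it is minimal whenever the model of $E$ is, and $a_{\mathfrak{p}}$ of a curve with good reduction is the Frobenius trace of the reduced curve, independent of the chosen minimal model. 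So your good-reduction case is fine and matches the paper; the multiplicative case is a genuine gap.
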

\begin{proof}
If $E^{(g)}$ has additive reduction at $\frakp$, we have that $\frakp\mid n_1$ or $\frakp\mid g$, then $a_{\mathfrak{p}}(E^{(g)})=0$ and there is nothing to prove. On the other hand, assume that $E^{(g)}$ has multiplicative reduction at $\frakp$. By Lemma 2.2 in \cite{conrad2005root} $E$ has split multiplicative reduction at $\frakp$ if and only if $\left(\frac{-c_6(E)}{\mathfrak{p}}\right)=1$, as a consequence, this quantity is equal to $a_{\mathfrak{p}}(E)$. Furthermore, since $c_6(E^{(g)})=g^3c_6(E)$, we have
\[
a_{\mathfrak{p}}(E^{(g)})=\left(\frac{-c_6(E^{(g)})}{\mathfrak{p}}\right)=\left(\frac{-g^3c_6(E)}{\mathfrak{p}}\right)=\left(\frac{g}{\mathfrak{p}}\right)a_{\mathfrak{p}}(E).
\]
Finally, assume that $\frakp\nmid\frakn^{(g)}$, Define $M=\{x\in \F_\frakp\colon x^3+Ax^2+B\neq 0\}$. Consequently, we obtain
\begin{align*}
\#E^{(g)}_{\frakp}(\F_{\frakp})&=|\frakp|+1+\sum_{x\in M}\left(\frac{x^3+Agx^2+Bg^2x}{\mathfrak{p}}\right)\\
&=|\frakp|+1+\sum_{x\in M}\left(\frac{g^3(x^3+Ax^2+Bx)}{\mathfrak{p}}\right)\\
&=|\frakp|+1+\left(\frac{g}{\mathfrak{p}}\right)\sum_{x\in M}\left(\frac{x^3+Ax^2+Bx}{\mathfrak{p}}\right)\\
&=|\frakp|+1-\left(\frac{g}{\mathfrak{p}}\right)a_{\mathfrak{p}}(E^{(g)}),
\end{align*}
by recalling the definition of $a_{\frakp}(E)$ we get the desired result.
\end{proof}
\begin{proposition}\label{L function twist}
Let $E$ be a modular elliptic curve with conductor $\frakn_E$ and associated primitive newform $f$. Assume that $E'$ is a quadratic twist of $E$, with conductor $\frakn'_E$ and associated primitive newform $f'$, such that $\ord_\frakp(\frakn_E)\leq \ord_\frakp(\frakn'_E)$ for all $\frakp$. Thus, there exist $n_1,n_2,d,g$ square-free monic polynomials with $1=\gcd(n_1,g)$, and $d=\gcd(n_2,g)$ such that $\frakn_E=(n_1^2n_2)\infty$ and $\frakn'_E=\frakn_E g^2/d$. Then one has
\begin{align*}
L(\Sym^2f',2)=L(\Sym^2f,2)\frac{|d|}{|g|^3}\prod_{\frakp\mid d}(|\frakp|^2-1)\prod_{\frakp\mid  g/d}\left((|\frakp|+1)^2-a_{\mathfrak{p}}(E)^2\right)(|\frakp|-1).
\end{align*}
\end{proposition}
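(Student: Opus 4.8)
The plan is to compare the two $L$-functions prime-by-prime through their Euler products, writing
\[
\frac{L(\Sym^2 f',s)}{L(\Sym^2 f,s)}=\prod_{\frakp}\frac{L_\frakp(\Sym^2 f',s)}{L_\frakp(\Sym^2 f,s)},
\]
and then specializing to $s=2$. The first step is to sort the primes according to their reduction behavior for $E$ and for $E'=E^{(g)}$, using the conductor relation $\frakn'_E=\frakn_E\, g^2/d$ together with $\frakn_E=(n_1^2 n_2)\infty$, $\gcd(n_1,g)=1$ and $d=\gcd(n_2,g)$. A short computation of $\frakp$-exponents produces four relevant classes: $\frakp\mid n_1$ (additive for both), $\frakp\mid n_2/d$ (multiplicative for both), $\frakp\mid d$ (multiplicative for $E$ but additive for $E'$, since its exponent rises from $1$ to $1+(2-1)=2$), and $\frakp\mid g/d$ (good for $E$ but additive for $E'$, since $\frakp^2\mid g^2/d$); every remaining prime has good reduction for both curves.

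Next I would show that the only classes contributing to the ratio are $\frakp\mid d$ and $\frakp\mid g/d$. For $\frakp\mid n_1$ both local factors equal $1$; for $\frakp\mid n_2/d$ both equal $(1-|\frakp|^{-s})^{-1}$, the value being insensitive to whether the reduction is split or non-split. For a prime $\frakp$ of good reduction for both curves, Lemma \ref{twist a_p} gives $a_\frakp(E')=\pm a_\frakp(E)$; since the symmetric-square local factor depends on the Frobenius roots only through $\alpha_\frakp^2$, $\beta_\frakp^2$ and $\alpha_\frakp\beta_\frakp=|\frakp|$, all of which are unchanged under $a_\frakp\mapsto -a_\frakp$, the two factors coincide. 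Thus the good primes, which carry the twisting character, cancel — reflecting the general fact that $\Sym^2$ is invariant under quadratic twist except where the reduction type genuinely changes.

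It then remains to evaluate the ratio at $s=2$ over the two remaining classes. For $\frakp\mid d$ the factor passes from $(1-|\frakp|^{-2})^{-1}$ to $1$, contributing $(|\frakp|^2-1)/|\frakp|^2$. For $\frakp\mid g/d$ the factor passes from the good-reduction triple product to $1$; using $\alpha_\frakp\beta_\frakp=|\frakp|$ and $\alpha_\frakp^2+\beta_\frakp^2=a_\frakp(E)^2-2|\frakp|$ one finds
\[
\Bigl(1-\tfrac{\alpha_\frakp^2}{|\frakp|^2}\Bigr)\Bigl(1-\tfrac{\beta_\frakp^2}{|\frakp|^2}\Bigr)=\frac{(|\frakp|+1)^2-a_\frakp(E)^2}{|\frakp|^2},\qquad 1-\tfrac{\alpha_\frakp\beta_\frakp}{|\frakp|^2}=\frac{|\frakp|-1}{|\frakp|},
\]
so this class contributes $\bigl((|\frakp|+1)^2-a_\frakp(E)^2\bigr)(|\frakp|-1)/|\frakp|^3$. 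Finally I would collect denominators: as $d$ and $g/d$ are square-free, $\prod_{\frakp\mid d}|\frakp|^2=|d|^2$ and $\prod_{\frakp\mid g/d}|\frakp|^3=|g/d|^3=|g|^3/|d|^3$, so the scalar prefactor simplifies to $|d|^{-2}\cdot |d|^3/|g|^3=|d|/|g|^3$, which is exactly the stated identity.

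I expect the main obstacle to be the bookkeeping of reduction types rather than any analytic subtlety: one must verify carefully that every prime dividing $d$ really becomes additive for the twist (so that $E$ contributes the full multiplicative factor $(1-|\frakp|^{-2})^{-1}$ while $E'$ contributes $1$), and that no prime dividing $n_2/d$ slips into a different class or changes its symmetric-square factor. Once the classification is correct, the remaining work is the elementary identity for $(|\frakp|+1)^2-a_\frakp(E)^2$ and the tracking of norms via square-freeness.
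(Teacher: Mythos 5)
Your proposal is correct and follows essentially the same route as the paper: compare Euler factors prime by prime, observe (via Lemma \ref{twist a_p}) that primes where the conductor exponent is unchanged contribute trivially, evaluate the local ratios at $s=2$ for $\frakp\mid d$ and $\frakp\mid g/d$, and collect the powers of $|\frakp|$ using square-freeness to get the prefactor $|d|/|g|^3$. Your write-up is somewhat more explicit than the paper's about the classification of reduction types and the sign-insensitivity of the symmetric-square factor at good primes, but the argument is the same.
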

\begin{proof}
By Lemma \ref{twist a_p} we have that when $\ord_{\frakp}(\frakn)=\ord_{\frakp}(\frakn')$ the local factors are equal, i.e. $L_{\frakp}(\Sym^2f',2)=L_{\frakp}(\Sym^2f,2)$. If $\frakp\mid d$, we have that
\[
L_{\frakp}(\Sym^2f',s)=L_{\frakp}(\Sym^2f,s)(1-|\frakp|^{-s}),
\]
thus, at $s=2$ we obtain
\[
L_{\frakp}(\Sym^2f',2)=L_{\frakp}(\Sym^2f,2)\frac{1}{|\frakp|^2}(|\frakp|^2-1).
\]
Finally, assume that $\frakp\mid (g/d)$. The local factors are related as follows
\[
L_{\frakp}(\Sym^2f',s)=L_{\frakp}(\Sym^2f,s)\left(1-\alpha_{\mathfrak{p}}^2|\mathfrak{p}|^{-s}\right)
\left(1-\overline{\alpha_{\mathfrak{p}}}^2|\mathfrak{p}|^{-s}\right)\left(1-|\mathfrak{p}|^{1-s}\right),
\]
therefore at $s=2$ we obtain
\[
L_{\frakp}(\Sym^2f',2)=L_{\frakp}(\Sym^2f,2)\frac{1}{|\frakp|^3}\left((|\frakp|+1)^2-a_{\mathfrak{p}}(E)^2\right)(|\frakp|-1),
\]
putting all together, we achieve the desired result.
\end{proof}

\begin{proof}[Proof of Theorem \ref{Wconjecture twist}]
Since $E$ and $E^{(g)}$ are isomorphic over $\C_\infty$, we have that $j_E=j_{E^{(g)}}$, thus by Lemma \ref{2-adic degree} we obtain 
\[
\nu_2(m_{E^{(g)}})=\nu_2(m_E)+\nu_2(L(\Sym^2f^{(g)}, 2))-\nu_2(L(\Sym^2f, 2)).
\]
On the other hand, Proposition \ref{L function twist} implies that
\[
\nu_2(L(\Sym^2f^{(g)}, 2)/L(\Sym^2f, 2))=\sum_{\frakp\mid d}\nu_2(|\frakp|^2-1)+\sum_{\frakp\mid  g/d}\nu_2\left(((|\frakp|+1)^2-a_{\mathfrak{p}}(E)^2)(|\frakp|-1)\right).
\]
We know that $|\frakp|^2-1\equiv 0 \text{ (mod }8)$, meanwile $|\frakp|-1\equiv 0 \text{ (mod }2)$. As $E(K)[2]$ is non-trivial and it maps injectively into $E_\frakp(\F_\frakp)$ for every prime $\frakp\nmid \frakn\infty$, then $|\frakp|+1-a_{\mathfrak{p}}(E)\equiv 0 \text{ (mod }2)$, which implies $(|\frakp|+1)^2-a_{\mathfrak{p}}(E)^2\equiv 0 \text{ (mod }4)$. As a consequence
\[
\nu_2(L(\Sym^2f^{(g)}, 2))-\nu_2(L(\Sym^2f, 2))\geq 3\omega_K(g).
\]
Putting all together, we achieve the result. 
\begin{equation}\label{equ modular}
\nu_2(m_{E^{(g)}})\geq \nu_2(m_E)+3\omega_K(g). 
\end{equation}
By Proposition \ref{rank bound} we know that $\rk(E^{(g)})\leq 2(\omega_K(\frakn)+\omega_K(g))$. By our assumptions on $g$ we obtain that
\begin{equation*}
\nu_2(m_E)+3\omega_K(g)\geq 2(\omega_K(\frakn)+\omega_K(g)),    
\end{equation*}
consequently, $\rk(E^{(g)})\leq\nu_2(m_{E^{(g)}})$.
\end{proof}
\begin{proof}[Proof of Corollary \ref{Wconjecture twists ss}]
By Proposition \ref{power 2 modular} we have that $\nu_2(m_E)\geq \omega_K(\frakn)-3$. Since $E$ is semi-stable, $\frakn$ is square-free, consequently, Lemma \ref{rank twist} implies that $\rk(E^{(g)})\leq \omega_K(\frakn)+2\omega_K(g)$. Using the equation \eqref{equ modular}, we have
\begin{equation}\label{equ Watkins}
\nu_2(m_{E^{(g)}})\geq \nu_2(m_E)+3\omega_K(g)\geq \omega_K(\frakn)-3+3\omega_K(g)\geq \omega_K(g)-3+\rk(E^{(g)}),    
\end{equation}
hence Watkins' conjecture holds for $E^{(g)}$, whenever $\omega_K(d)\geq 3$. Furthermore, if a prime ideal $\frakp$ divides $\frakn$ and has non-split multiplicative reduction, by Theorem 3 in \cite{Atkin1970} $W_\frakp f=f$, consequently, $\mathcal{W}=\mathcal{W}'$. Therefore, if every prime $\frakp$ which divides $\frakn$ has non-split multiplicative reduction and $E(K)[2]\cong \Z/2\Z$ Proposition \ref{power 2 modular} implies that $\nu_2(m_E)\geq \omega_K(\frakn)-1$, thus, equation \eqref{equ Watkins} turns into
\[
\nu_2(m_{E^{(g)}})\geq \omega_K(g)-1+\rk(E^{(g)}),
\]
accordingly, Watkins' Conjecture holds for every square-free polynomial $g$ of even degree.
\end{proof}

\section*{Acknowledgements}
I want to thank Professor Hector Pasten for suggesting this
problem to me and for numerous helpful remarks. I was supported by ANID Doctorado Nacional 21190304.


\begin{thebibliography}{100} 
\bibitem{Atkin1970} Atkin, A. O., \& Lehner, J. (1970). \emph{Hecke operators on {$\Gamma_0(m)$}}. Mathematische Annalen, 185(2), 134-160.
\bibitem{breuil2001} Breuil, C. Conrad, B. Diamond, F. and Taylor, R. \emph{On the modularity of elliptic curves over $\Q$: wild 3-adic exercises}. Journal of the American Mathematical Society, pages 843–939, 2001.
\bibitem{caro2021} Jerson Caro and H\'ector Past\'en. \emph{Watkins’s conjecture for elliptic curves with non-split multiplicative reduction}. Preprint, 2021.
\bibitem{conrad2005root} Conrad, B., Conrad, K. and Helfgott, H. (2005)\emph{Root numbers and ranks in positive characteristic.} Advances in Mathematics, 198(2): 684–731.
\bibitem{Deligne1973} Deligne, P. (1973) \emph{Les constantes des  \'eqations fonctionnelles des fonctions $L$}, 501–597. Lecture Notes in Math., Vol. 349.
\bibitem{Drinfeld1974} Drinfel'd, V. G. (1974). \emph{Elliptic modules.} Mathematics of the USSR-Sbornik, 23(4), 561.
\bibitem{dummigan2013powers} Dummigan, N. and Krishnamoorthy, S. (2013) \emph{Powers of 2 in modular degrees of modular abelian varieties}. Journal of Number Theory, 133(2):501–522.
\bibitem{Esparza2021} Esparza-Lozano, J. and Past\'en, H. \emph{A conjecture of Watkins for quadratic twists.} Proceedings of the American Mathematical Society, 149(6): 2381–2385, 2021.
\bibitem{Gekeler1996}Gekeler, E. and Reversat, A. (1996) \emph{Jacobians of Drinfeld Modular Curves} J.Reine Angew. Math. 476, 27-93.
\bibitem{Grothendieck1964} Grothendieck, A. (1964) \emph{Formule de Lefschetz et rationalit\'e des fonctions $L$,} S\'em. Bourbaki 279.
\bibitem{Pal2010} P\'al, A. (2010) \emph{The Manin constant of elliptic curves over function fields.} Algebra \& Number Theory, 4(5), 509-545.
\bibitem{papikian2002degree} Papikian, M. (2002) \emph{On the degree of modular parametrizations over function fields.} Journal of Number Theory, 97(2): 317–349.
\bibitem{papikian2007analogue} Papikian, M. (2007) \emph{Analogue of the degree conjecture over function fields.} Transactions of the American Mathematical Society, 359(7): 3483–3503.
\bibitem{roberts2007explicit} Roberts, D. (2007) \emph{Explicit decent on elliptic curves over function fields.} PhD thesis, University of Nottingham.
\bibitem{schaefer2004} Schaefer, E. and Stoll, M. (2004) \emph{How to do $p$-descent on an elliptic curve.} Transactions  of  the American Mathematical Society, 356(3): 1209–1231.
\bibitem{schweizer 98} Schweizer, A. (1998). \emph{Involutory elliptic curves over $\mathbb {F} _q (T)$.} Journal de théorie des nombres de Bordeaux, 10(1), 107-123
\bibitem{shioda1992some} Shioda, T. (1992) \emph{Some remarks on elliptic curves over function fields}. Ast\'erisque, 209(12):99–114.
\bibitem{tate65} Tate, J. (1965). \emph{On the conjectures of Birch and Swinnerton-Dyer and a geometric analog}. S\'eminaire Bourbaki, 9(306), 415-440.
\bibitem{tate67} Tate, J., \& Shafarevich, I. R. (1967). \emph{The rank of elliptic curves.} In Doklady Akademii Nauk (Vol. 175, No. 4, pp. 770-773). Russian Academy of Sciences.
\bibitem{taylor1995} Taylor, R. and Wiles, A. \emph{Ring-theoretic properties of certain hecke algebras}. Annals of Mathematics, pages 553–572, 1995.
\bibitem{ulmer2002elliptic} Ulmer, D. (2002) \emph{Elliptic curves with large rank over function fields}. Annals of Mathematics, pages 295–315.
\bibitem{Ulmer2011} Ulmer, D. (2011). \emph{Park City lectures on elliptic curves over function fields.} arXiv preprint arXiv:1101.1939.
\bibitem{Watkins2002} Watkins, M. \emph{Computing the modular degree of an elliptic curve.} Experimental Mathematics, 11(4): 487–502, 2002.
\bibitem{Wiles1995} Wiles, A. \emph{Modular elliptic curves and Fermat’s last theorem}. Annals of Mathematics, pages 443–551, 1995.
\bibitem{Yazdani2011} Yazdani, S. \emph{Modular abelian varieties of odd modular degree}. Algebra and Number Theory, 5(1): 37–62, 2011.
\end{thebibliography}
\end{document}